\renewcommand{\@seccntformat}[1]{{\csname the#1\endcsname}.\hspace{.5em}}
\newtheorem{thm}{Theorem}[section]
\newtheorem{lem}[thm]{Lemma}
\newtheorem{remark}[thm]{Remark}
\def\pmod #1{\ ({\rm{mod}}\ #1)}
\def\Z{\Bbb Z}
\def\N{\Bbb N}
\def\bg{\bigg}
\def\({\bg(}
\def\){\bg)}
\def\t{\text}
\def\f{\frac}
\def\bi{\binom}
\def\eq{\equiv}
\renewcommand{\qed}{\hfill$\Box$\medskip}
\numberwithin{equation}{section}
\begin{document}

\hbox{Preprint}
\medskip

\title
[{Proof of a conjectural supercongruence modulo $p^5$}]
{Proof of a conjectural supercongruence modulo $p^5$}

\author
[Guo-Shuai Mao and Zhi-Wei Sun] {Guo-Shuai Mao and Zhi-Wei Sun}

\address{(Guo-Shuai Mao) Department of Mathematics, Nanjing
University of Information Science and Technology, Nanjing 210044,  People's Republic of China}
\email{maogsmath@163.com}

\address{(Zhi-Wei Sun) Department of Mathematics, Nanjing
University, Nanjing 210093, People's Republic of China}
\email{zwsun@nju.edu.cn}

\keywords{Supercongruence, binomial coefficient, WZ method.
\newline \indent 2020 {\it Mathematics Subject Classification}. Primary 11B65, 11A07.}

\begin{abstract}
In this paper we prove the supercongruence
$$\sum_{n=0}^{(p-1)/2}\frac{6n+1}{256^n}\binom{2n}n^3\equiv p(-1)^{(p-1)/2}+(-1)^{(p-1)/2}\frac{7}{24}p^4B_{p-3}\pmod{p^5}$$
for any prime $p>3$, which was conjectured by Sun in 2019.
\end{abstract}
\maketitle

\section{Introduction}

In 1997, L. van Hamme \cite{vhamme} proposed many conjectural $p$-adic supercongruences
motivated by corresponding Ramanujan-type series for $1/\pi$. For example, he conjectured the supercongruence
\begin{equation}
\label{p^4}\sum_{n=0}^{(p-1)/2}\frac{6n+1}{256^n}\binom{2n}n^3\equiv (-1)^{(p-1)/2}p\pmod{p^4}
\end{equation}
for any prime $p>3$, inspired by the Ramanujan series (cf. \cite{Ram})
$$\sum_{n=0}^\infty\frac{6n+1}{256^n}\binom{2n}n^3=\frac 4{\pi}.$$
The congruence $\eqref{p^4}$ was confirmed by L. Long \cite{long-2011-pjm} in 2011.

In 2011 Z.-W. Sun \cite{sun-scm-2011} formulated many conjectural supercongruences involving Bernoulli numbers or Euler numbers. Recall that the Bernoulli numbers $B_0,B_1,\ldots$ and the Euler numbers
$E_0,E_1,\ldots$ are defined by
$$\frac x{e^x-1}=\sum_{n=0}^\infty B_n\frac{x^n}{n!}\ (|x|<2\pi)
\ \text{and}\ \frac{2}{e^x+e^{-x}}=\sum_{n=0}^\infty E_n\frac{x^n}{n!}\ \left(|x|<\frac{\pi}2\right)$$
respectively. For example, he conjectured the congruence
\begin{equation}\label{S-256}\sum_{n=0}^{p-1}\frac{6n+1}{256^n}\binom{2n}n^3\equiv (-1)^{(p-1)/2}p-p^3E_{p-3}\pmod{p^4}
\end{equation}
for any prime $p>3$. This was later confirmed by G.-S. Mao and C.-W. Wen \cite[Th. 1.2]{MW}.

In 2019 Z.-W. Sun \cite[Conj. 22]{OpenConj} conjectured that for any prime $p>3$ and positive odd integer $m$ we have
\begin{align*}&\frac{16^{m-1}}{(pm)^4\binom{m-1}{\frac{m-1}2}^3}
\bigg(\sum_{n=0}^{\frac{pm-1}2}\frac{6n+1}{256^n}\binom{2n}n^3
-(-1)^{(p-1)/2}p\sum_{r=0}^{\frac{m-1}2}\frac{6r+1}{256^r}\binom{2r}r^3\bigg)
\\\quad &\equiv(-1)^{(p-1)/2}\frac 7{24}B_{p-3}\pmod p.
\end{align*}
In this paper we confirm this in the case $m=1$. Namely, we establish the following result.

\begin{thm}\label{Thsun1} Let $p>3$ be a prime. Then
\begin{equation}\label{6n112256}
\sum_{n=0}^{(p-1)/2}\frac{6n+1}{256^n}\binom{2n}n^3\equiv p(-1)^{(p-1)/2}+(-1)^{(p-1)/2}\frac{7}{24}p^4B_{p-3}\pmod{p^5}.
\end{equation}
\end{thm}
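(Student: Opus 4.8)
The plan is to prove the supercongruence by combining the classical WZ (Wilf–Zeilberger) method with careful $p$-adic expansions of the relevant factorials and harmonic sums. Since van Hamme's congruence \eqref{p^4} already gives the result modulo $p^4$, the true task is to pin down the $p^4$-coefficient modulo $p$, i.e.\ to show that the ``error term'' $\sum_{n=0}^{(p-1)/2}\frac{6n+1}{256^n}\binom{2n}n^3-(-1)^{(p-1)/2}p$ is congruent to $(-1)^{(p-1)/2}\frac{7}{24}p^4B_{p-3}$ modulo $p^5$. I would begin by recording the standard $p$-adic expansion of $\binom{2n}n$ in terms of harmonic-type sums, writing $\frac{1}{256^n}\binom{2n}n^3$ as a product involving $\binom{(p-1)/2}{n}$-style quantities so that Fermat-quotient and Wolstenholme-type congruences become available; the key auxiliary quantities will be $H_k^{(r)}=\sum_{j=1}^k 1/j^r$ and their values modulo powers of $p$, together with the congruence $H_{p-1}^{(2)}\equiv\frac{2}{3}pB_{p-3}\pmod{p^2}$ and related identities that connect power sums to $B_{p-3}$.

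The core technical device will be a WZ pair: I would seek a companion function $G(n,k)$ and a certifying rational function so that the creative-telescoping identity produces a hypergeometric transformation for the partial sums $\sum_{n=0}^{(p-1)/2}\frac{6n+1}{256^n}\binom{2n}n^3$. Concretely, one looks for functions $F(n,k)$, $G(n,k)$ with $F(n,k+1)-F(n,k)=G(n+1,k)-G(n,k)$, where summing $F$ over $n$ reproduces the target sum and summing the telescoped $G$ over $k$ yields a single closed-form boundary term evaluable modulo $p^5$. This is the approach used by Long and by Mao–Wen for the modulo-$p^4$ analogues, and it should extend once the $p^5$-level boundary terms are expanded correctly. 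The WZ route effectively trades the sum for a terminating evaluation at $k=(p-1)/2$, whose constituent factorials and binomials I would then expand $p$-adically.

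The heart of the argument is the $p$-adic analysis of that boundary term. I would set $n=(p-1)/2$ (and related shifted indices) and expand $\binom{p-1}{(p-1)/2}$, $\binom{2n}n$, and the powers of $256$ up to $O(p^5)$ using logarithmic derivatives and the Bernoulli-number evaluations of odd power sums $\sum_{j=1}^{(p-1)/2}1/j^{r}$. The appearance of $B_{p-3}$ is forced through congruences such as $\sum_{k=1}^{(p-1)/2}\frac1{k^2}\equiv -\frac{7}{3}pB_{p-3}\pmod{p^2}$ and the companion for $\sum 1/k$, so the coefficient $\frac{7}{24}$ should emerge from combining these after the $256^{-n}$ and binomial expansions are multiplied out. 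I expect the main obstacle to be the bookkeeping at the $p^4$ and $p^5$ levels: one must track several harmonic sums simultaneously, control cross terms that naively look like they contribute at order $p^3$ but cancel, and ensure every factor of $1/24$, $2/3$, and $7/3$ is correctly accounted for so that the spurious lower-order contributions vanish and only the $\frac{7}{24}B_{p-3}$ term survives. A secondary difficulty is verifying that the WZ certificate behaves well $p$-adically at the endpoint $k=(p-1)/2$, since denominators divisible by $p$ must be handled with care to avoid losing precision modulo $p^5$.
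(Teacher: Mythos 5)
Your overall strategy (a WZ pair plus $p$-adic expansion of the resulting pieces) is the same as the paper's, but your plan has a concrete gap in what you claim the WZ identity delivers. You assert that summing the telescoped identity ``yields a single closed-form boundary term evaluable modulo $p^5$,'' so that the WZ route ``trades the sum for a terminating evaluation at $k=(p-1)/2$.'' That is not what happens. For the pair used here, satisfying $F(n,k-1)-F(n,k)=G(n+1,k)-G(n,k)$, summing over $n\in\{0,\ldots,(p-1)/2\}$ and then over $k\in\{1,\ldots,(p-1)/2\}$ produces
\begin{equation*}
\sum_{n=0}^{(p-1)/2}F(n,0)=F\left(\frac{p-1}2,\frac{p-1}2\right)+\sum_{k=1}^{(p-1)/2}G\left(\frac{p+1}2,k\right),
\end{equation*}
i.e.\ a closed-form boundary term \emph{plus} a genuine sum over $k$ at the shifted argument $(p+1)/2$, which does not telescope and has no closed form. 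Your plan accounts only for the first piece. But that piece alone, evaluated via Glaisher's and Carlitz's refinements of the Wolstenholme and Morley congruences, equals
\begin{equation*}
(-1)^{(p-1)/2}\,p\left(1-pq_p(2)+p^2q_p(2)^2-p^3q_p(2)^3-\frac7{12}p^3B_{p-3}\right)\pmod{p^5},
\end{equation*}
which is visibly not the right-hand side of the theorem: it carries Fermat-quotient terms at orders $p^2$, $p^3$, $p^4$ and the wrong Bernoulli coefficient ($-\frac7{12}$ instead of $+\frac7{24}$). These defects are repaired exactly by the sum you neglected, which contributes $(-1)^{(p-1)/2}\left(p^2q_p(2)-p^3q_p(2)^2+p^4q_p(2)^3+\frac78p^4B_{p-3}\right)$ modulo $p^5$; the quotient terms cancel and $-\frac7{12}+\frac78=\frac7{24}$ gives the stated coefficient.

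Evaluating that $G$-sum modulo $p^5$ is in fact the bulk of the proof, and nothing in your proposal addresses it. One must rewrite $G\left(\frac{p+1}2,k\right)$ via Pochhammer-symbol quotients, expand those quotients $p$-adically in harmonic sums, and then establish congruences for roughly eight weighted sums of the shape $\sum_{k=1}^{(p-1)/2}\frac{(p/2-k)X_k}{(p+1-2k)(p+2k)}$ with $X_k\in\{1,\,H_k,\,H_{2k},\,H_k^2,\,H_kH_{2k},\,H_{2k}^2,\,H(2;k),\,H(2;2k)\}$, each requiring nontrivial input on (alternating) multiple harmonic sums modulo $p$, $p^2$ or $p^3$ (results of Hoffman, Z.-H.~Sun, Tauraso, Tauraso--Zhao, and Hessami Pilehrood--Hessami Pilehrood--Tauraso). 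Without this component the coefficient $\frac7{24}$ cannot emerge from your outline. A minor historical point: Long's modulo-$p^4$ proof of \eqref{p^4} used hypergeometric evaluation identities rather than the WZ method, so the precedent you cite for your ``single boundary term'' picture is not accurate either.
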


Another similar congruence modulo $p^5$ states that
$$\sum_{n=0}^{p-1}\frac{3n+1}{16^n}\binom{2n}n^3\equiv p+\frac 76 p^4\pmod{p^5}$$
for any prime $p>3$, which was conjectured by Sun \cite{sun-scm-2011} in 2011 and
confirmed by C. Wang and D.-W. Hu \cite{WH} in 2020.

In the next section, we provide some known lemmas.
We will use the WZ method to prove Theorem \ref{Thsun1} in Section 3.

\section{Some known lemmas}

In 1862 J. Wolstenholme \cite{wolstenholme-qjpam-1862} proved the classical congruence
$$\bi{2p-1}{p-1}\eq1\pmod{p^3}$$
for any prime $p>3$. This was  refined by J.W.L. Glaisher \cite{Gl} in 1900.

\begin{lem} [Glaisher \cite{Gl}]\label{Lem-Gl} For any prime $p>3$, we have
\begin{equation}\label{Gl}\bi{2p-1}{p-1}\eq1-\f23 p^3B_{p-3}\pmod{p^4}.
\end{equation}
\end{lem}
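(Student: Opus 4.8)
The plan is to turn the central binomial coefficient into a product over $\{1,\dots,p-1\}$ and expand it as a polynomial in $p$ whose coefficients are symmetric functions of the inverses $1/k$. Since every $1/k$ with $1\ls k\ls p-1$ is a $p$-adic unit, I start from
$$\bi{2p-1}{p-1}=\f{(p+1)(p+2)\cdots(2p-1)}{(p-1)!}=\prod_{k=1}^{p-1}\l(1+\f pk\r),$$
and expand the product. Every term carrying $p^j$ with $j\ge4$ has $p$-adic valuation $\ge4$ (its coefficient being a $p$-adic integer), so modulo $p^4$ only three terms survive:
$$\bi{2p-1}{p-1}\eq 1+pe_1+p^2e_2+p^3e_3\pmod{p^4},$$
where $e_j=e_j\l(1,\tfrac12,\dots,\tfrac1{p-1}\r)$ is the $j$-th elementary symmetric polynomial in the $1/k$.

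Next I would pass from the $e_j$ to the power sums $H_i=\sum_{k=1}^{p-1}k^{-i}$ via Newton's identities, namely $e_1=H_1$, $e_2=\tfrac12(H_1^2-H_2)$ and $e_3=\tfrac16(H_1^3-3H_1H_2+2H_3)$, and then prune using the known orders of vanishing. Wolstenholme's theorem gives $p^2\mid H_1$ and $p\mid H_2$ (up to units), while $H_3\eq0\pmod p$ because $(p-1)\nmid 3$ for $p>3$: Fermat's little theorem turns $1/k^3$ into $k^{p-4}$ and $\sum_{k=1}^{p-1}k^{p-4}\eq0\pmod p$. Tracking valuations, $p^2H_1^2$, $p^3H_1^3$ and $p^3H_1H_2$ all vanish modulo $p^4$, and $\tfrac{p^3}3H_3$ does too; what remains is
$$\bi{2p-1}{p-1}\eq 1+pH_1-\f{p^2}2H_2\pmod{p^4}.$$

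The substantive input is then the pair of classical Glaisher congruences expressing these harmonic sums through $B_{p-3}$:
$$H_1\eq-\f13p^2B_{p-3}\pmod{p^3},\qquad H_2\eq\f23pB_{p-3}\pmod{p^2}.$$
Substituting gives $pH_1\eq-\tfrac13p^3B_{p-3}$ and $-\tfrac{p^2}2H_2\eq-\tfrac13p^3B_{p-3}$ modulo $p^4$, and adding these two equal contributions yields $\bi{2p-1}{p-1}\eq1-\tfrac23p^3B_{p-3}\pmod{p^4}$, exactly as claimed. To prove the two harmonic congruences themselves I would combine the Faulhaber expansion
$$\sum_{k=1}^{p-1}k^N=\tfrac1{N+1}\sum_{j\ge0}\bi{N+1}jB_jp^{N+1-j}$$
with the Fermat--Euler substitution $1/k^i\eq k^N$ (mod the relevant power of $p$) to isolate the single summand that carries $B_{p-3}$.

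The main obstacle is precisely pinning down these two harmonic-sum congruences to the stated precision (modulo $p^3$ and $p^2$, respectively); once they are in hand, the binomial identity is pure bookkeeping on $p$-adic valuations. A convenient internal check while establishing them is the symmetry relation obtained by pairing $k$ with $p-k$, which after expanding $\tfrac1{p-k}$ geometrically gives $2H_1\eq-pH_2\pmod{p^3}$; this is consistent with the two Bernoulli formulas above and also immediately recovers Wolstenholme's $\bi{2p-1}{p-1}\eq1\pmod{p^3}$ from the displayed expansion.
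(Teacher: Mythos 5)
The paper does not prove this lemma at all: it is quoted as a classical result of Glaisher (1900), with the accompanying remark pointing to \cite{Mc} and \cite{HT} for modern treatments, so there is no internal proof to compare yours against. That said, your argument is correct and is essentially the standard modern derivation. The expansion $\binom{2p-1}{p-1}=\prod_{k=1}^{p-1}(1+p/k)=\sum_{j}p^je_j$, the Newton-identity passage to power sums, and the valuation pruning all check out: Wolstenholme gives $H_1\equiv0\pmod{p^2}$ and $H_2\equiv0\pmod{p}$, which kills $p^2H_1^2$, $p^3H_1^3$ and $p^3H_1H_2$, while your Fermat argument correctly gives $H_3\equiv0\pmod{p}$ for $p>3$ (since $(p-1)\nmid(p-4)$), leaving $\binom{2p-1}{p-1}\equiv1+pH_1-\frac{p^2}{2}H_2\pmod{p^4}$. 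The two harmonic-sum congruences you then invoke, $H_1\equiv-\frac13p^2B_{p-3}\pmod{p^3}$ and $H_2\equiv\frac23pB_{p-3}\pmod{p^2}$, are exactly the cases $(a,r)=(1,1)$ and $(a,r)=(2,1)$ of the paper's Lemma \ref{(i)} (Hoffman's result on $H(\{a\}^r;p-1)$, whose hypotheses $p>ar+2$ hold here), so within this paper you could simply cite that lemma and dispense with the Faulhaber sketch; that sketch is the one soft spot in your write-up, since carrying it to the stated precision (modulo $p^3$ for $H_1$) requires genuine care with von Staudt--Clausen denominators and Kummer-type reductions, which you flag but do not execute. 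Your consistency check $2H_1\equiv-pH_2\pmod{p^3}$ is also sound, with the small caveat that the geometric expansion of $1/(p-k)$ produces an extra term $-p^2H_3$, which you are entitled to discard only because $H_3\equiv0\pmod{p}$, as you established.
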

\begin{remark} For modern references about \eqref{Gl}, the reader may consult \cite{Mc} and \cite{HT}.
\end{remark}

In 1895, F. Morley \cite{Mor} got the following fundamental congruence:
$$\binom{p-1}{(p-1)/2}\equiv(-1)^{(p-1)/2}4^{p-1}\pmod{p^3}$$
for any prime $p>3$. This was refined by L. Carlitz \cite{C} in 1953.

\begin{lem}[Carlitz\cite{C}]\label{Lemc} For each odd prime $p$, we have
$$(-1)^{(p-1)/2}\binom{p-1}{(p-1)/2}\equiv4^{p-1}+\frac{p^3}{12}B_{p-3}\pmod{p^4}.$$
\end{lem}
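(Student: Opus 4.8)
The plan is to compare the two sides via their $p$-adic logarithms. Assume $p>3$ (the case $p=3$ is checked directly), and write $w:=(-1)^{(p-1)/2}\binom{p-1}{(p-1)/2}$. Expanding $\binom{p-1}{(p-1)/2}=\prod_{k=1}^{(p-1)/2}\frac{p-k}{k}$ and factoring $-1$ out of each numerator gives
\[
w=\prod_{k=1}^{(p-1)/2}\left(1-\frac{p}{k}\right),
\]
while $4^{p-1}=(1+pq)^2$ with $q:=q_p(2)=(2^{p-1}-1)/p$ the Fermat quotient. Both $w$ and $4^{p-1}$ are congruent to $1$ modulo $p$, so their $p$-adic logarithms are defined, and it suffices to prove
\[
\log\frac{w}{4^{p-1}}\equiv\frac{1}{12}p^3B_{p-3}\pmod{p^4};
\]
exponentiating and using $4^{p-1}\equiv1\pmod p$ then yields $w\equiv 4^{p-1}+\frac{1}{12}p^3B_{p-3}\pmod{p^4}$, the desired statement.

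Next I would expand each logarithm. Termwise expansion of the product gives
\[
\log w=-\sum_{i=1}^{\infty}\frac{p^i}{i}H_i,\qquad H_i:=\sum_{k=1}^{(p-1)/2}\frac{1}{k^i},
\]
and since each $H_i$ is $p$-integral, only $i=1,2,3$ survive modulo $p^4$: I need $H_1$ modulo $p^3$, $H_2$ modulo $p^2$, and $H_3$ modulo $p$. On the other side $\log 4^{p-1}=2\log(1+pq)=2pq-p^2q^2+\frac{2}{3}p^3q^3-\cdots$ is a power series in $pq$ alone and involves no Bernoulli number. This is the structural heart of the argument: every Fermat-quotient term arising in $\log w$ must cancel against $\log 4^{p-1}$, leaving behind only a multiple of $B_{p-3}$.

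I would then substitute the classical half-range harmonic-sum congruences (Glaisher, Lehmer, Z.-H. Sun),
\[
H_1\equiv -2q+pq^2-\tfrac{2}{3}p^2q^3-\tfrac{7}{12}p^2B_{p-3}\pmod{p^3},\quad H_2\equiv\tfrac{7}{3}pB_{p-3}\pmod{p^2},\quad H_3\equiv -2B_{p-3}\pmod p,
\]
the latter two carrying no Fermat quotient. The piece $-pH_1$ then reproduces exactly the series $2pq-p^2q^2+\frac{2}{3}p^3q^3$ of $\log 4^{p-1}$, so all $q$-terms cancel, and the residual coefficient of $p^3B_{p-3}$ in $\log(w/4^{p-1})$ is the sum of the contributions from $-pH_1$, $-\frac{p^2}{2}H_2$, $-\frac{p^3}{3}H_3$, namely
\[
\tfrac{7}{12}-\tfrac{7}{6}+\tfrac{2}{3}=\tfrac{1}{12}.
\]
Since $\log(w/4^{p-1})$ is thus $O(p^3)$, exponentiation contributes no cross terms and the result follows.

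The principal obstacle is not the logarithm/exponentiation bookkeeping, which is routine once the harmonic sums are known, but assembling the three congruences for $H_1,H_2,H_3$ to the required precision and confirming the two coincidences above: the exact cancellation of the Fermat-quotient terms and the emergence of the coefficient $\tfrac{1}{12}$. These half-sum congruences rest in turn on Bernoulli-number identities such as $\sum_{k=1}^{p-1}k^{-2}\equiv\frac{2}{3}pB_{p-3}\pmod{p^2}$ together with the reflection $k\mapsto p-k$; a fully self-contained proof would have to (re)derive them, and that is where essentially all of the genuine arithmetic lies.
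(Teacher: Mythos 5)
Your argument is correct, but note that the paper does not prove this lemma at all: it is quoted as known background from Carlitz's 1953 paper, so your $p$-adic logarithm derivation is a genuinely independent route. It is also an economical one. Writing $w=(-1)^{(p-1)/2}\binom{p-1}{(p-1)/2}=\prod_{k=1}^{(p-1)/2}\bigl(1-\frac pk\bigr)$ reduces everything to $H\bigl(1;\frac{p-1}2\bigr)$ mod $p^3$, $H\bigl(2;\frac{p-1}2\bigr)$ mod $p^2$ and $H\bigl(3;\frac{p-1}2\bigr)$ mod $p$, which are precisely the three cases of the paper's Lemma \ref{(ii)} (Z.-H. Sun), with values $-2q+pq^2-\frac23p^2q^3-\frac7{12}p^2B_{p-3}$, $\frac73pB_{p-3}$ and $-2B_{p-3}$; I checked your bookkeeping: $-pH_1$ reproduces exactly $2pq-p^2q^2+\frac23p^3q^3\equiv\log 4^{p-1}\pmod{p^4}$, so all Fermat-quotient terms cancel, and the residual coefficient $\frac7{12}-\frac76+\frac23=\frac1{12}$ is as claimed; since $\log(w/4^{p-1})\in p^3\Z_p$, exponentiation indeed contributes nothing further modulo $p^4$ for $p\geq5$, and multiplying back by $4^{p-1}\equiv1\pmod p$ preserves the $B_{p-3}$ term. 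What your approach buys over Carlitz's original (a classical manipulation with Glaisher-type Bernoulli congruences) is that the lemma becomes a corollary of results the paper already imports, making this piece of background self-contained relative to Lemma \ref{(ii)}. Two small caveats: Lemma \ref{(ii)} is stated for primes $p>a+2$, so its $a=3$ case formally covers only $p\geq7$, and $p=5$ should be verified directly alongside $p=3$ (both are immediate; at $p=3$ one has $B_0=1$ and $16+\frac{27}{12}\equiv-2\pmod{81}$, consistent with $-\binom21=-2$); and, as you rightly flag, a fully self-contained proof would still have to establish the half-range power-sum congruences themselves, which is where the genuine arithmetic content resides.
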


We also need the following result of E. Lehmer established in 1938.

\begin{lem} [E. Lehmer \cite{Leh}] \label{Lem-Le} For any prime $p>3$, we have
\begin{equation}\label{Lem-H}\sum_{k=1}^{(p-1)/2}\f1k\eq-2\sum_{k=1}^{(p-1)/2}\f1{2k-1}\eq-2 q_p(2)+ p\,q_p(2)^2\pmod{p^2},\end{equation}
where $q_p(2)$ denotes the Fermat quotient $(2^{p-1}-1)/p$.
\end{lem}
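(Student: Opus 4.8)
The statement packages two congruences, and I would establish them one at a time. Throughout, write $H_m=\sum_{k=1}^{(p-1)/2}1/k$ for the half-range harmonic sum. The equality of the two sums is the easier half. Starting from the full harmonic sum $H_{p-1}=\sum_{k=1}^{p-1}1/k$ and splitting the index by parity, the even-indexed terms contribute $\sum_{k=1}^{(p-1)/2}1/(2k)=\tfrac12H_m$ while the odd-indexed terms are exactly $\sum_{k=1}^{(p-1)/2}1/(2k-1)$, so
$$H_{p-1}=\sum_{k=1}^{(p-1)/2}\frac1{2k-1}+\frac12H_m.$$
The classical Wolstenholme congruence $\binom{2p-1}{p-1}\equiv1\pmod{p^3}$ recorded at the start of Section~2 yields $H_{p-1}\equiv0\pmod{p^2}$: expanding $\binom{2p-1}{p-1}=\prod_{j=1}^{p-1}(1+p/j)$ modulo $p^3$ shows $pH_{p-1}+p^2e_2\equiv0\pmod{p^3}$, and the vanishing of $e_2=\tfrac12(H_{p-1}^2-\sum_{j=1}^{p-1}1/j^2)$ modulo $p$ (using $\sum_{j=1}^{p-1}1/j^2\equiv\sum_{j=1}^{p-1}j^2\equiv0\pmod p$ for $p>3$) upgrades this to $H_{p-1}\equiv0\pmod{p^2}$. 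Substituting into the displayed identity gives $H_m\equiv-2\sum_{k=1}^{(p-1)/2}1/(2k-1)\pmod{p^2}$, the first asserted congruence.

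The heart of the matter is evaluating $H_m$ modulo $p^2$ in terms of the Fermat quotient, and I would extract it from the central binomial coefficient $\binom{p-1}{(p-1)/2}$. Writing $\binom{p-1}{(p-1)/2}=(-1)^{(p-1)/2}\prod_{j=1}^{(p-1)/2}(1-p/j)$ and expanding the product modulo $p^3$ gives
$$(-1)^{(p-1)/2}\binom{p-1}{(p-1)/2}\equiv 1-pH_m+\frac{p^2}{2}\bigl(H_m^2-H_m^{(2)}\bigr)\pmod{p^3},$$
where $H_m^{(2)}=\sum_{k=1}^{(p-1)/2}1/k^2$. On the other hand, Morley's congruence $(-1)^{(p-1)/2}\binom{p-1}{(p-1)/2}\equiv 4^{p-1}\pmod{p^3}$ (quoted before Lemma~\ref{Lemc}), together with $4^{p-1}=(1+pq_p(2))^2=1+2pq_p(2)+p^2q_p(2)^2$, evaluates the same quantity modulo $p^3$. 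Equating the two expressions yields
$$-pH_m+\frac{p^2}{2}\bigl(H_m^2-H_m^{(2)}\bigr)\equiv 2pq_p(2)+p^2q_p(2)^2\pmod{p^3}.$$

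I would then compare the two sides order by order. Reducing modulo $p^2$ first gives $H_m\equiv-2q_p(2)\pmod p$, whence $H_m^2\equiv4q_p(2)^2\pmod p$. The half-range quadratic sum satisfies $H_m^{(2)}\equiv0\pmod p$: the reflection $k\mapsto p-k$ shows $\sum_{k=1}^{p-1}1/k^2\equiv2H_m^{(2)}\pmod p$, and the left side vanishes modulo $p$ as above, so $H_m^{(2)}\equiv0\pmod p$ since $p$ is odd. Now writing $H_m\equiv-2q_p(2)+pc\pmod{p^2}$ and matching the coefficient of $p^2$ in the last display (using $\tfrac{p^2}{2}H_m^2\equiv2p^2q_p(2)^2$ and $\tfrac{p^2}{2}H_m^{(2)}\equiv0$ modulo $p^3$) forces $-c+2q_p(2)^2\equiv q_p(2)^2\pmod p$, that is $c\equiv q_p(2)^2\pmod p$. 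Hence $H_m\equiv-2q_p(2)+pq_p(2)^2\pmod{p^2}$, completing the proof.

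The main obstacle is the lift from modulo $p$ to modulo $p^2$ in the second part. Modulo $p$ the relation $H_m\equiv-2q_p(2)$ drops out immediately, but pinning down the next $p$-adic coefficient forces one to carry the quadratic term $\tfrac{p^2}{2}(H_m^2-H_m^{(2)})$ in the binomial expansion accurately and to know $H_m^{(2)}$ modulo $p$; it is essential that Morley's congruence be available to the full precision $p^3$. Everything else is bookkeeping, so the care lies in not discarding the $p^2$ contributions and in justifying the vanishing of the half-range quadratic harmonic sum.
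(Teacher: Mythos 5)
Your proof is correct and complete; since the paper states this lemma as a known result cited from E.~Lehmer \cite{Leh} and gives no proof of its own, yours is a genuine self-contained alternative, and it is a nice one because it uses only two facts the paper already quotes in Section~2. The first congruence follows, as you say, from the parity splitting $H_{p-1}=\sum_{k=1}^{(p-1)/2}\frac1{2k-1}+\frac12H_m$ together with $H_{p-1}\equiv0\pmod{p^2}$, which you correctly extract from Wolstenholme's theorem via $pH_{p-1}+p^2e_2\equiv0\pmod{p^3}$ and $e_2\equiv0\pmod p$; the second follows by matching the expansion $(-1)^{(p-1)/2}\binom{p-1}{(p-1)/2}\equiv1-pH_m+\frac{p^2}2\bigl(H_m^2-H_m^{(2)}\bigr)\pmod{p^3}$ against Morley's congruence and $4^{p-1}=1+2pq_p(2)+p^2q_p(2)^2$, and your order-by-order lift (using $H_m\equiv-2q_p(2)\pmod p$ and $H_m^{(2)}\equiv0\pmod p$, the latter by the reflection $k\mapsto p-k$) is carried out accurately. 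Two remarks are worth making. First, your computation shows that Lehmer's congruence is essentially \emph{equivalent} to Morley's congruence modulo $p^3$ once $H_m^{(2)}\equiv0\pmod p$ is known; indeed the standard modern proof of Morley runs your argument in the opposite direction, so to avoid circularity you are implicitly relying on Morley's independent 1895 proof (via De Moivre-type trigonometric identities) --- this is legitimate here, since the paper quotes Morley as an established theorem, but it deserves an explicit word. Second, Lehmer's original method (Bernoulli-polynomial evaluations of partial harmonic sums) is genuinely different and yields more, namely the modulo-$p^3$ refinement with the $B_{p-3}$ term recorded as the case $a=1$ of Lemma~\ref{(ii)}; your route cannot reach that precision from Morley modulo $p^3$ alone (one would need Carlitz's refinement, Lemma~\ref{Lemc}, plus $H_m^{(2)}$ modulo $p^2$), but it is exactly sufficient for the lemma as stated.
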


Let $a_1,a_2,\ldots,a_m$ be integers. For any integer $n\geq m$, we define the alternating multiple harmonic sum
$$
H(a_1,a_2,\ldots,a_m;n):=\sum_{\substack{1\leq k_1<k_2<\ldots<k_m\leq n}}\prod_{i=1}^m\frac{\mbox{sign}(a_i)^{k_i}}{k_i^{|a_i|}},
$$
and call $m$ and $\sum_{i=1}^m|a_i|$ its {\it depth} and {\it weight} respectively. For convenience, we simply write $H_n$ to stand for $H(1;n)$.

We need the following known results as lemmas.

\begin{lem}[\cite{H}] \label{(i)} Let $a,r\in\Z^+=\{1,2,3,\ldots\}$, For any prime $p>ar+2$, we have
\begin{align*}
H(\{a\}^r;p-1)\equiv\begin{cases}(-1)^r\frac{a(ar+1)}{2(ar+2)}p^2B_{p-ar-2}\ &\pmod{p^3}\qquad \text{if} \ ar\ \text{is odd},\\
(-1)^{r-1}\frac{a}{ar+1}p B_{p-ar-1}\ &\pmod{p^2}\qquad \text{if}\ ar\ \t{is even}.\end{cases}
\end{align*}
\end{lem}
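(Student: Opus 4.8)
The plan is to recognize $H(\{a\}^r;p-1)$ as the $r$-th elementary symmetric function $e_r$ of the quantities $x_k=1/k^a$ for $1\le k\le p-1$, and to expand $e_r$ in terms of the power sums $P_j:=\sum_{k=1}^{p-1}x_k^{\,j}=\sum_{k=1}^{p-1}k^{-aj}=H_{p-1}^{(aj)}$. From the identity $\prod_{k=1}^{p-1}(1+x_k t)=\exp\big(\sum_{j\ge1}\frac{(-1)^{j-1}}{j}P_j t^j\big)$ the exponential (partition) formula yields
$$H(\{a\}^r;p-1)=e_r=\sum_{\lambda\vdash r}\ \prod_{j\ge1}\frac{1}{m_j!}\Big(\frac{(-1)^{j-1}}{j}P_j\Big)^{m_j},$$
where $\lambda=(1^{m_1}2^{m_2}\cdots)$ runs over partitions of $r$ and $m_j$ is the multiplicity of the part $j$. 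The hypothesis $p>ar+2$ guarantees $r<p$ and $aj\le ar<p-2$ for every part $j\le r$, so all factorials $m_j!$ and integers $j$ that appear are $p$-adic units and every relevant Bernoulli number is $p$-integral.

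The analytic input will be Glaisher's classical power-sum congruences: for $p>m+2$ one has $H_{p-1}^{(m)}\equiv \frac{m}{m+1}\,pB_{p-m-1}\pmod{p^2}$ when $m$ is even, and $H_{p-1}^{(m)}\equiv -\frac{m(m+1)}{2(m+2)}\,p^2B_{p-m-2}\pmod{p^3}$ when $m$ is odd. Writing $v_p$ for the $p$-adic valuation, this gives $v_p(P_j)\ge1$ always, and $v_p(P_j)\ge2$ precisely when $aj$ is odd. I will single out the unique one-part partition $\lambda=(r)$, whose term is $\frac{(-1)^{r-1}}{r}H_{p-1}^{(ar)}$. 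Inserting the appropriate Glaisher congruence and simplifying produces exactly $(-1)^{r-1}\frac{a}{ar+1}pB_{p-ar-1}$ when $ar$ is even and $(-1)^{r}\frac{a(ar+1)}{2(ar+2)}p^2B_{p-ar-2}$ when $ar$ is odd, i.e.\ the claimed right-hand side.

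It then remains to show that every multi-part partition contributes negligibly. Setting $\ell(\lambda)=\sum_j m_j$ (the number of parts) and $o(\lambda)=\sum_{aj\ \mathrm{odd}}m_j$, the displayed product has valuation at least $\sum_j m_j\,v_p(P_j)\ge \ell(\lambda)+o(\lambda)$. In the case $ar$ even I only need valuation $\ge2$, which is immediate from $\ell(\lambda)\ge2$, so every $\ell\ge2$ term vanishes modulo $p^2$ and the one-part term is the whole answer. In the case $ar$ odd (so both $a$ and $r$ are odd) I need valuation $\ge3$: this is clear once $\ell(\lambda)\ge3$, and the crux is $\ell(\lambda)=2$, where a two-part partition $r=j_1+j_2$ of the odd number $r$ must have exactly one odd part $j_i$; since $a$ is odd, $aj_i$ is then odd, forcing $o(\lambda)\ge1$ and hence valuation $\ge3$.

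I expect the main obstacle to be precisely this parity bookkeeping for $\ell(\lambda)=2$ in the odd case, together with the verification that $p>ar+2$ legitimizes every application of Glaisher's congruence (each $P_j$ with $j\le r$, and the $p$-integrality of the Bernoulli numbers $B_{p-aj-1}$ and $B_{p-aj-2}$). The power-sum congruences themselves are classical, so once the estimate $v_p(\mathrm{term})\ge\ell(\lambda)+o(\lambda)$ is in hand the result follows by isolating the single dominant term; I would run the argument uniformly through the exponential/partition formula rather than unwinding Newton's identities by hand.
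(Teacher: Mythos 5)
Your proposal is correct. Note, however, that the paper offers no proof of this statement at all: it is quoted as a known result, with a citation to Hoffman's paper on quasi-symmetric functions and mod $p$ multiple harmonic sums, so there is no internal argument to compare against. Your reconstruction is essentially the standard route (and close in spirit to the cited source): identify $H(\{a\}^r;p-1)$ with the elementary symmetric function $e_r$ of the $x_k=k^{-a}$, expand via the exponential/partition formula in the power sums $P_j=H_{p-1}^{(aj)}$, feed in the classical Glaisher--Z.-H.\ Sun congruences ($P_j\equiv\frac{aj}{aj+1}pB_{p-aj-1}\pmod{p^2}$ for $aj$ even, $P_j\equiv-\frac{aj(aj+1)}{2(aj+2)}p^2B_{p-aj-2}\pmod{p^3}$ for $aj$ odd), and kill all multi-part partitions by the valuation bound $v_p(\text{term})\ge\ell(\lambda)+o(\lambda)$. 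The details all check: the hypothesis $p>ar+2$ ensures $(p-1)\nmid aj$ for every part, makes every $m_j!$ and $j^{m_j}$ a $p$-adic unit, and keeps the relevant Bernoulli numbers $p$-integral by von Staudt--Clausen; the one-part term $\frac{(-1)^{r-1}}{r}P_r$ reproduces exactly the stated right-hand sides in both parity cases; and your parity bookkeeping at $\ell(\lambda)=2$ in the odd case is sound, since a two-part partition of the odd number $r$ must contain exactly one odd part $j$, whence $aj$ is odd (as $a$ is odd) and the term has valuation at least $2+1=3$. The only caveat is that your argument rests on the quoted power-sum congruences, which you correctly attribute to the classical literature (they appear, e.g., in the paper's reference to Z.-H.\ Sun's work and are the $r=1$ instance of the lemma itself), so as a blind proof it is complete modulo those standard inputs.
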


\begin{lem}[\cite{SH1}] \label{(ii)} For any $a\in\Z^+$ and prime $p>a+2$, we have
\begin{align*}
&H\left(a;\frac{p-1}2\right)\\
&\equiv\begin{cases}-2q_p(2)+pq_p(2)^2-\frac23p^2q_p(2)^3-\frac7{12}p^2B_{p-3}\pmod{p^3} &\text{if}\ a=1,\\
-\frac{2^a-2}{a}B_{p-a} \pmod{p} &\text{if $a>1$ is odd},\\
\frac{a(2^{a+1}-1)}{2(a+1)}pB_{p-a-1} \pmod{p^2} &\text{if $a$ is even}.\end{cases}
\end{align*}
\end{lem}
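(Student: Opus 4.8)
The plan is to treat the three cases separately, in the order odd ($a>1$), even, and finally $a=1$, since each later case feeds on the earlier ones. Throughout write $S_s:=H(s;\frac{p-1}2)=\sum_{k=1}^{(p-1)/2}k^{-s}$. For the case $a>1$ odd I would pass from reciprocals to honest power sums via Fermat's little theorem: since $k^{p-1}\equiv1\pmod p$ we have $S_a\equiv\sum_{k=1}^{(p-1)/2}k^{p-1-a}\pmod p$. Setting $m=p-1-a$ (which is odd) and invoking the Bernoulli-polynomial formula for power sums, $\sum_{k=1}^{(p-1)/2}k^m=\frac1{m+1}\big(B_{m+1}(\frac{p+1}2)-B_{m+1}\big)$, I reduce modulo $p$. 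Because $m+1=p-a\le p-3$, von Staudt--Clausen makes the coefficients of $B_{m+1}(x)$ $p$-integral, so $\frac{p+1}2\equiv\frac12$ gives $B_{m+1}(\frac{p+1}2)\equiv B_{m+1}(\frac12)=(2^{-m}-1)B_{m+1}\pmod p$. Since $2^{-m}\equiv2^a$ and $m+1\equiv-a\pmod p$, this collapses to $S_a\equiv\frac{(2^a-2)B_{p-a}}{-a}=-\frac{2^a-2}a B_{p-a}\pmod p$, as claimed, where $B_{p-a}$ is $p$-integral precisely because $a>1$.

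For $a$ even I would use the reflection $k\mapsto p-k$. Splitting $H(a;p-1)$ into the ranges $k\le\frac{p-1}2$ and $k\ge\frac{p+1}2$ and expanding $\frac1{(p-j)^a}=(-1)^a\sum_{i\ge0}\binom{a+i-1}i\frac{p^i}{j^{a+i}}$ yields $H(a;p-1)=2S_a+apS_{a+1}+O(p^2)$, the tail being killed since the $S_{a+i}$ are $p$-integral. Working modulo $p^2$, Lemma \ref{(i)} with $r=1$ supplies $H(a;p-1)\equiv\frac a{a+1}pB_{p-a-1}$, while the already-proved odd case gives $S_{a+1}\equiv-\frac{2^{a+1}-2}{a+1}B_{p-a-1}\pmod p$. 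Solving $2S_a\equiv H(a;p-1)-apS_{a+1}$ then produces exactly $\frac{a(2^{a+1}-1)}{2(a+1)}pB_{p-a-1}$.

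The case $a=1$ is the \emph{main obstacle}, since it demands precision modulo $p^3$. The device is the product formula $(-1)^{(p-1)/2}\binom{p-1}{(p-1)/2}=\prod_{j=1}^{(p-1)/2}(1-\frac pj)$; taking logarithms gives $\log\big((-1)^{(p-1)/2}\binom{p-1}{(p-1)/2}\big)=-pS_1-\frac{p^2}2S_2-\frac{p^3}3S_3-O(p^4)$. On the other side I would insert Carlitz's refinement (Lemma \ref{Lemc}), $(-1)^{(p-1)/2}\binom{p-1}{(p-1)/2}\equiv4^{p-1}+\frac{p^3}{12}B_{p-3}\pmod{p^4}$, write $4^{p-1}=(1+pq_p(2))^2$, and expand the logarithm to order $p^3$; the cross terms in that expansion are what generate the $-p^2q_p(2)^2$ and $\frac23p^3q_p(2)^3$ contributions. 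Substituting the values $S_2\equiv\frac73pB_{p-3}\pmod{p^2}$ and $S_3\equiv-2B_{p-3}\pmod p$ (the $a=2$ and $a=3$ instances just established) and solving for $S_1$ yields $S_1\equiv-2q_p(2)+pq_p(2)^2-\frac23p^2q_p(2)^3-\frac7{12}p^2B_{p-3}\pmod{p^3}$. The delicate point is the exact bookkeeping of the several $B_{p-3}$-contributions: those arising from $\frac{p^3}{12}B_{p-3}$, from $-\frac{p^2}2S_2$, and from $-\frac{p^3}3S_3$ must combine to the single coefficient $-\frac7{12}$, and the rational Fermat-quotient coefficients must survive the logarithm without error. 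Finally, the few small primes lying at the boundary of the hypotheses (for instance $p=5$, where $S_3$ is not yet available from the odd case) should be verified by direct computation.
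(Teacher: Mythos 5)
Your proposal is correct, but the comparison here is lopsided: the paper offers no proof of this lemma at all — it is quoted as a known result of Z.-H. Sun \cite{SH1}, so your self-contained three-stage bootstrap is genuinely new relative to the paper. Your odd-$a$ case (Fermat's little theorem, the power-sum formula $\sum_{k\le N}k^m=\frac{1}{m+1}(B_{m+1}(N+1)-B_{m+1})$, von Staudt--Clausen, and $B_n(\frac12)=(2^{1-n}-1)B_n$) is sound, and your even-$a$ case correctly combines the reflection $k\mapsto p-k$ with Lemma \ref{(i)} at $r=1$: solving $2S_a\equiv\frac{a}{a+1}pB_{p-a-1}+ap\,\frac{2^{a+1}-2}{a+1}B_{p-a-1}\pmod{p^2}$ gives exactly $\frac{a(2^{a+1}-1)}{2(a+1)}pB_{p-a-1}$. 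For $a=1$ the bookkeeping also checks out: from $\prod_{j=1}^{(p-1)/2}(1-\frac pj)=(-1)^{(p-1)/2}\binom{p-1}{(p-1)/2}$ and Lemma \ref{Lemc} one gets $-pS_1-\frac{p^2}2S_2-\frac{p^3}3S_3\equiv 2pq_p(2)-p^2q_p(2)^2+\frac23p^3q_p(2)^3+\frac{p^3}{12}B_{p-3}\pmod{p^4}$, and with $S_2\equiv\frac73pB_{p-3}$, $S_3\equiv-2B_{p-3}$ the Bernoulli coefficient is $\frac1{12}+\frac12\cdot\frac73-\frac13\cdot2=\frac7{12}$, yielding precisely the stated congruence. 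Two remarks. First, your $a=1$ case is a derivation \emph{from} Carlitz's congruence rather than an independent proof — your product identity shows the two statements are equivalent; this is legitimate within the paper's logical structure (Lemma \ref{Lemc} is cited independently), but note that Sun's original route in \cite{SH1} evaluates $H(a;\frac{p-1}2)$ directly from Bernoulli-polynomial congruences without passing through the central binomial coefficient. Second, your flagged boundary worry is vacuous: the odd-$a$ argument only requires $p>a$ (the von Staudt--Clausen obstruction $p-1\mid j$ with $j\le p-a$ forces $a\le1$), so $S_3\equiv-2B_2\pmod 5$ does follow from the general argument at $p=5$, as the direct check $\frac98\equiv3\equiv-2\cdot\frac16\pmod 5$ confirms, and hence no separate small-prime verification is actually needed.
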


\begin{lem}[\cite{H}]\label{(iii)} For any  $a,b\in\Z^+$ and any prime $p>a+b+1$, we have
$$
H(a,b;p-1)\equiv\frac{(-1)^b}{a+b}\binom{a+b}aB_{p-a-b}\pmod p.
$$
\end{lem}

\begin{lem}[\cite{RT}]\label{(iv)}
For any prime $p>3$, we have
\begin{align*}
H(1,-2;p-1)&\equiv H(-1,2;p-1)\equiv H(2,-1;p-1)\\
&\equiv H(-2,1;p-1)\equiv\frac14B_{p-3}
\pmod p
\end{align*}
and
$$
H(1,1,-1;p-1)\equiv-\frac13q_p(2)^3-\frac7{24}B_{p-3}\pmod p.
$$
\end{lem}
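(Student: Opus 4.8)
The plan is to handle the four weight-three depth-two sums first and the depth-three sum $H(1,1,-1;p-1)$ afterwards, the latter being where the Fermat-quotient cube genuinely arises. Throughout I write $H_n=\sum_{i=1}^n i^{-1}$ and $H_n^{(2)}=\sum_{i=1}^n i^{-2}$, and I may assume Wolstenholme's theorem in the form $H_{p-1}\equiv0\pmod{p^2}$ and $H_{p-1}^{(2)}\equiv0\pmod p$, together with Lemmas \ref{(i)}, \ref{(ii)} and \ref{Lem-Le} (small primes, for which the hypotheses of those lemmas fail, being checked directly).

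For the depth-two sums the main device is the reflection $k\mapsto p-k$. Since $1/(p-k)^a\equiv(-1)^a/k^a\pmod p$ and $(-1)^{p-k}=-(-1)^k$ (as $p$ is odd), reversing the order of an increasing tuple of indices shows $H(1,-2;p-1)\equiv H(-2,1;p-1)$ and $H(-1,2;p-1)\equiv H(2,-1;p-1)\pmod p$, so it suffices to evaluate one representative of each pair. For these I would use the stuffle identities
$$H(1;p-1)H(-2;p-1)=H(1,-2;p-1)+H(-2,1;p-1)+H(-3;p-1)$$
and $H(-1;p-1)H(2;p-1)=H(-1,2;p-1)+H(2,-1;p-1)+H(-3;p-1)$, where $H(-3;n):=\sum_{i=1}^{n}(-1)^i/i^3$. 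Since $H(1;p-1)\equiv0\pmod{p^2}$, $H(2;p-1)\equiv0\pmod p$, and the remaining single sums are $p$-integral, both left-hand sides vanish modulo $p$; the reflection symmetry then collapses each identity to $2H(1,-2;p-1)\equiv-H(-3;p-1)$ and $2H(-1,2;p-1)\equiv-H(-3;p-1)$.

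It remains to compute $H(-3;p-1)$. Splitting off the even indices gives $H(-3;p-1)=\tfrac14 H(3;\tfrac{p-1}2)-\sum_{i=1}^{p-1}i^{-3}$, the second term vanishing modulo $p$ by pairing $i$ with $p-i$, while Lemma \ref{(ii)} (case $a=3$) gives $H(3;\tfrac{p-1}2)\equiv-2B_{p-3}$. Hence $H(-3;p-1)\equiv-\tfrac12 B_{p-3}$ and each of the four depth-two sums is $\equiv\tfrac14 B_{p-3}\pmod p$. For $H(1,1,-1;p-1)$ I would again use reflection, which yields $H(1,1,-1;p-1)\equiv H(-1,1,1;p-1)$, and combine it with the stuffle relations coming from $H(1;p-1)H(1,-1;p-1)$ and $H(-1;p-1)H(1,1;p-1)$. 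Every single sum occurring as a factor is $\equiv0\pmod p$ except $H(-1;p-1)$, which multiplies $H(1,1;p-1)\equiv0$, so all of these relations collapse to the \emph{single} congruence $2H(1,1,-1;p-1)+H(1,-1,1;p-1)\equiv-\tfrac12 B_{p-3}\pmod p$. This does not isolate $H(1,1,-1;p-1)$ and, in particular, cannot produce the term $q_p(2)^3$, so one genuinely new input is required.

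The main obstacle, and the technical heart of the proof, is to supply that extra evaluation by passing to the half-range. Putting the sign on the outermost index and splitting by parity, one gets modulo $p$
$$H(1,1,-1;p-1)\equiv\frac12\sum_{m=1}^{(p-1)/2}\frac1m\left(H_{2m-1}^2-H_{2m-1}^{(2)}\right),$$
after discarding $H(1,1,1;p-1)$, which is $\equiv0\pmod p$ by Newton's identities together with $H_{p-1}\equiv0\pmod{p^2}$ and $\sum_{i=1}^{p-1}i^{-3}\equiv0\pmod p$. The remaining half-range sums are the crux: expanding $H_{2m-1}$ and $H_{2m-1}^{(2)}$ and reorganizing brings in the Fermat quotient through $H(1;\tfrac{p-1}2)\equiv-2q_p(2)$ (Lemma \ref{Lem-Le}), its power $q_p(2)^3$ via the triple-product structure of the first sum, and $B_{p-3}$ via the weight-three pieces; the depth-two values just obtained and Lemmas \ref{(i)}--\ref{(ii)} are used to close the computation. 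Carrying this out carefully should give $H(1,1,-1;p-1)\equiv-\tfrac13 q_p(2)^3-\tfrac7{24}B_{p-3}$, and the linear relation above then determines $H(1,-1,1;p-1)$ as well. I expect the bookkeeping of these half-range sums, rather than any single conceptual step, to be the principal difficulty.
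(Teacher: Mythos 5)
The paper offers no proof of this lemma at all --- it is quoted verbatim from Tauraso \cite{RT} --- so there is no internal argument to compare against, and your attempt must stand on its own. Its first half does: your treatment of the four depth-two sums is correct and complete. The reflection $k\mapsto p-k$ does give $H(1,-2;p-1)\equiv H(-2,1;p-1)$ and $H(-1,2;p-1)\equiv H(2,-1;p-1)\pmod p$, the two stuffle identities are exact, the left-hand sides vanish mod $p$ for the reasons you state, and $H(-3;p-1)=\tfrac14H\bigl(3;\tfrac{p-1}2\bigr)-H(3;p-1)\equiv-\tfrac12B_{p-3}\pmod p$ follows from Lemma \ref{(ii)} with $a=3$; this pins all four sums at $\tfrac14B_{p-3}$. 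You also correctly diagnose that stuffle plus reflection close on the single relation $2H(1,1,-1;p-1)+H(1,-1,1;p-1)\equiv-\tfrac12B_{p-3}\pmod p$, so that a genuinely new input is needed for the depth-three sum.

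The gap is that your proposed new input is not one. Your half-range identity is correct --- in fact it is an exact identity over $\Q$, not merely a congruence --- and precisely for that reason it is vacuous as a reduction. Setting $A=\sum_{m\le(p-1)/2}H_{2m-1}^2/m$ and $B=\sum_{m\le(p-1)/2}H^{(2)}_{2m-1}/m$ and splitting back by parity, one finds
\begin{equation*}
A-B=2H(1,1,1;p-1)+2H(1,1,-1;p-1)
\end{equation*}
identically: all the depth-one and depth-two terms cancel exactly, using $H(3;p-1)+H(-3;p-1)=\tfrac14H\bigl(3;\tfrac{p-1}2\bigr)$. So $\tfrac12(A-B)-H(1,1,1;p-1)=H(1,1,-1;p-1)$ is a tautology, and evaluating $A$ and $B$ ``using the depth-two values just obtained and Lemmas \ref{(i)}--\ref{(ii)}'' --- that is, by the same parity/stuffle manipulations --- necessarily returns $H(1,1,-1;p-1)\equiv H(1,1,-1;p-1)$, with every $B_{p-3}$ contribution cancelling and no possibility of producing $q_p(2)^3$. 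Indeed, all the inputs you list are pure-$B_{p-3}$ statements except $H\bigl(1;\tfrac{p-1}2\bigr)\equiv-2q_p(2)$, and your sketch never shows how a term of the shape $H_{(p-1)/2}^3$ is isolated without introducing new half-range quadratic Euler sums such as $\sum_{k\le(p-1)/2}H_{2k}^2/k$ or $\sum_{k\le(p-1)/2}H_k^2/k$, which are exactly as hard as the target; in the literature (including \cite{RT} and \cite{mw-ijnt-2019}, whose such evaluations this very paper imports) they are obtained from genuinely independent ingredients, e.g.\ congruences for $\sum_{k=1}^{p-1}2^k/k^2$ and $\sum_{k=1}^{p-1}2^k/k^3$, or mod-$p^2$ weight-two data of the type of Lemma \ref{TZ}. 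Your closing sentence ``carrying this out carefully should give\dots'' is therefore an unproved assertion at the unique point where the constant $-\tfrac13q_p(2)^3$ must emerge; as written, the proof of the second congruence of the lemma is missing its essential step.
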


\begin{lem}[\cite{HHT}]\label{(v)} Let $a,b\in\Z^+$ with $a+b$ odd. For any prime $p>a+b$, we have
$$
H\left(a,b;\frac{p-1}2\right)\equiv\frac{B_{p-a-b}}{2(a+b)}\left((-1)^b\binom{a+b}a+2^{a+b}-2\right)\pmod p.
$$
\end{lem}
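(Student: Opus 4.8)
The plan is to express the half-range double sum $H\!\left(a,b;\frac{p-1}2\right)$ in terms of quantities already controlled by Lemmas \ref{(ii)} and \ref{(iii)}, by combining a reflection identity with the elementary stuffle (quasi-shuffle) relation. Throughout write $N=\frac{p-1}2$, and note that since $p$ and $a+b$ are both odd while $p>a+b$, we automatically have $p\ge a+b+2$, so that the hypotheses of the cited lemmas are satisfied for every harmonic sum appearing below.

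First I would split the full-range sum $H(a,b;p-1)=\sum_{1\le i<j\le p-1}i^{-a}j^{-b}$ according to whether the indices lie in the lower half $L=\{1,\ldots,N\}$ or the upper half $U=\{N+1,\ldots,p-1\}$. Since $i<j$, the case $i\in U,\,j\in L$ cannot occur, leaving three pieces: both indices in $L$ (which is precisely $H(a,b;N)$), the mixed piece $\sum_{i\in L,\,j\in U}$, and both indices in $U$. For the pieces involving $U$ I would apply the reflection $k\mapsto p-k$ together with $\frac1{k^a}\equiv\frac{(-1)^a}{(p-k)^a}\pmod p$. The both-in-$U$ piece becomes $(-1)^{a+b}H(b,a;N)=-H(b,a;N)$ (the inequality reverses and the exponents are interchanged), while the mixed piece becomes $(-1)^bH(a;N)H(b;N)$, where $H(a;N),H(b;N)$ denote the single sums. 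Collecting these yields
$$H(a,b;N)-H(b,a;N)\equiv H(a,b;p-1)-(-1)^bH(a;N)H(b;N)\pmod p.$$

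Next I would invoke the stuffle identity $H(a;N)H(b;N)=H(a,b;N)+H(b,a;N)+H(a+b;N)$. Adding it to the previous display eliminates $H(b,a;N)$ and gives
$$2H(a,b;N)\equiv H(a,b;p-1)+\bigl(1-(-1)^b\bigr)H(a;N)H(b;N)-H(a+b;N)\pmod p.$$
The decisive observation is that the product term vanishes modulo $p$: since $a+b$ is odd, exactly one of $a,b$ is even, and by the even case of Lemma \ref{(ii)} the corresponding single sum is a multiple of $p$, whence $H(a;N)H(b;N)\equiv0\pmod p$. It then remains only to substitute the evaluation $H(a,b;p-1)\equiv\frac{(-1)^b}{a+b}\binom{a+b}a B_{p-a-b}$ from Lemma \ref{(iii)} and, as $a+b>1$ is odd, $H(a+b;N)\equiv-\frac{2^{a+b}-2}{a+b}B_{p-a-b}$ from the odd case of Lemma \ref{(ii)}; dividing by $2$ reproduces exactly the asserted formula.

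The computation is otherwise routine, so the point demanding genuine care is the bookkeeping in the reflection step: one must track how the strict inequality $i<j$ transforms under $k\mapsto p-k$ and how the exponents $a,b$ are swapped, so that the both-in-$U$ piece is correctly identified as $H(b,a;N)$ rather than $H(a,b;N)$. A secondary technical nuisance is that the odd case of Lemma \ref{(ii)} applied to $H(a+b;N)$ nominally asks for $p>a+b+2$ whereas we only know $p\ge a+b+2$; the single borderline prime $p=a+b+2$ (when it happens to be prime) is disposed of by a direct check.
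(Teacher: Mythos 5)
Your proposal is correct, but note that the paper itself gives no proof of this lemma at all: it is stated as a known result quoted from Hessami Pilehrood, Hessami Pilehrood and Tauraso \cite{HHT}, so there is no internal argument to compare against. What your derivation buys is self-containedness within the paper's own toolkit: the reflection $k\mapsto p-k$ gives, with $N=(p-1)/2$,
$$
H(a,b;p-1)\equiv H(a,b;N)+(-1)^bH(a;N)H(b;N)-H(b,a;N)\pmod{p},
$$
and the stuffle relation $H(a;N)H(b;N)=H(a,b;N)+H(b,a;N)+H(a+b;N)$ then eliminates $H(b,a;N)$, reducing everything to Lemmas \ref{(ii)} and \ref{(iii)}, which the paper also quotes. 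Your bookkeeping in the reflection step is right: under $(i,j)\mapsto(p-j,p-i)$ the strict inequality is preserved and the exponents swap, so the both-in-$U$ piece is $(-1)^{a+b}H(b,a;N)=-H(b,a;N)$, while the mixed piece factors completely because $i+j'\le 2N=p-1<p$ imposes no extra constraint. The product term indeed dies: exactly one of $a,b$ is even, and then the corresponding half sum is $\equiv0\pmod p$ by the even case of Lemma \ref{(ii)}, whose hypothesis ($p>a+2$, resp.\ $p>b+2$) follows from $p\ge a+b+2$. Finally, your deferred ``direct check'' at the borderline prime $p=a+b+2$ does go through: there
$$
H(a+b;N)=\sum_{k=1}^{N}\frac1{k^{p-2}}\equiv\sum_{k=1}^{N}k=\frac{p^2-1}{8}\equiv-\frac18\pmod{p},
$$
while the odd case of Lemma \ref{(ii)} predicts $-\frac{2^{p-2}-2}{p-2}B_{2}\equiv-\frac{1/2-2}{-2}\cdot\frac16=-\frac18\pmod{p}$, so that evaluation remains valid at this weight and no separate treatment of the final congruence is needed.
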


\begin{lem}[R. Tauraso and J. Q. Zhao \cite{TZ}]\label{TZ}  For any prime $p>3$, we have
\begin{equation}\label{TZp2}
H(1,-1;p-1)\equiv q_p(2)^2-pq_p(2)^3-\frac{13}{24}pB_{p-3}\pmod{p^2}.
\end{equation}
\end{lem}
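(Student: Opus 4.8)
The plan is to reduce the alternating double sum to \emph{half-range} harmonic sums and then evaluate those using Lehmer's congruence (Lemma \ref{Lem-Le}) together with the half-range and weight-$3$ evaluations already recorded. It is worth first explaining why a purely ``full-range'' attack via the stuffle and reversal relations is \emph{not} enough. The stuffle product
\[
H(1;p-1)\,H(-1;p-1)=H(1,-1;p-1)+H(-1,1;p-1)+H(-2;p-1)
\]
expresses only the \emph{symmetric} combination $H(1,-1;p-1)+H(-1,1;p-1)$ in terms of single sums. Moreover, applying the substitution $k_i\mapsto p-k_i$ (expanding $1/(p-k)=-\sum_{t\ge0}p^t/k^{t+1}$ and using $(-1)^{p-k}=-(-1)^k$) turns $H(1,-1;p-1)$ into $-H(-1,1;p-1)-p\big(H(-2,1;p-1)+H(-1,2;p-1)\big)+O(p^2)$, which by Lemma \ref{(iv)} is $-H(-1,1;p-1)-\tfrac{p}{2}B_{p-3}\pmod{p^2}$. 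Since the total weight is even and the product of the two signs is $-1$, this reversal again returns the same symmetric combination; it is a consistency check, not an extraction of $H(1,-1;p-1)$ itself. A genuinely new ingredient is therefore required.

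To break the symmetry I would split the outer index by parity, writing $H(1,-1;p-1)=E-O$ with $E=\sum_{1\le j<k\le p-1,\ k\ \mathrm{even}}1/(jk)$ and $O$ the odd part. Then $E+O=H(1,1;p-1)\equiv-\tfrac13pB_{p-3}\pmod{p^2}$ by Lemma \ref{(i)} (with $a=1,r=2$), so it suffices to find $E$. Putting $k=2i$ and writing $H_{2i-1}=\tfrac12H_{i-1}+O_i$ with $O_i=\sum_{t=1}^i1/(2t-1)$, one obtains after elementary rearrangement
\[
E=\tfrac14\,H\big(1,1;\tfrac{p-1}2\big)+\tfrac12\,M-H_{\frac{p-1}2},\qquad M:=\sum_{1\le l<i\le(p-1)/2}\frac{1}{(2l-1)\,i},
\]
where I have used the partial fraction $\tfrac1{(2i-1)i}=\tfrac2{2i-1}-\tfrac1i$ and the Wolstenholme relation $H_{p-1}\equiv0\pmod{p^2}$ to evaluate the diagonal $\sum_i 1/((2i-1)i)\equiv-2H_{(p-1)/2}$. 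The two outer pieces are immediate: Lehmer's congruence (Lemma \ref{Lem-Le}) gives $H_{(p-1)/2}\equiv-2q_p(2)+pq_p(2)^2\pmod{p^2}$, so that $H(1,1;\tfrac{p-1}2)=\tfrac12\big(H_{(p-1)/2}^2-H(2;\tfrac{p-1}2)\big)$ contributes $q_p(2)^2-pq_p(2)^3$ through $H_{(p-1)/2}^2\equiv4q_p(2)^2-4pq_p(2)^3$ (this is exactly where the $q_p(2)^2$ and $-pq_p(2)^3$ of the target are born), while $H(2;\tfrac{p-1}2)\equiv\tfrac73pB_{p-3}\pmod{p^2}$ by Lemma \ref{(ii)}.

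The crux, and the step I expect to be hardest, is the mixed double sum $M$: it couples the odd denominators $1/(2l-1)$ with the full denominators $1/i$, so it is not one of the sums covered verbatim by Lemmas \ref{(ii)}, \ref{(iv)} or \ref{(v)}, and—exactly as in the full-range situation above—any rearrangement through partial sums (stuffle-type algebra) only ever relates $M$ to itself. To pin down $M\pmod{p^2}$ I would use the half-range reflection $H_{p-1-2i}\equiv H_{2i}+p\,H(2;2i)\pmod{p^2}$ (equivalently, the passage from the odd set $\{1,3,\dots,p-2\}$ to the full range), which produces a second, genuinely independent relation for $M$, and then feed in the weight-$2$ and odd-weight-$3$ half-range congruences of Lemmas \ref{(ii)} and \ref{(v)} together with the values in Lemma \ref{(iv)}; the expected outcome is $M\equiv-4q_p(2)+2pq_p(2)^2-\tfrac{7}{24}pB_{p-3}\pmod{p^2}$. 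Assembling $H(1,-1;p-1)=2E-H(1,1;p-1)=\tfrac12H(1,1;\tfrac{p-1}2)+M-2H_{(p-1)/2}-H(1,1;p-1)$, the linear $q_p(2)$-terms in $M$ cancel against those of $-2H_{(p-1)/2}$, while the several $B_{p-3}$ contributions (from $H(1,1;p-1)$, from $H(2;\tfrac{p-1}2)$ inside $H(1,1;\tfrac{p-1}2)$, and from $M$) combine to the single coefficient $-\tfrac{13}{24}$, yielding $H(1,-1;p-1)\equiv q_p(2)^2-pq_p(2)^3-\tfrac{13}{24}pB_{p-3}\pmod{p^2}$ as claimed.
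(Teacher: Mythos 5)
The paper offers no proof of this lemma at all --- it is imported verbatim from Tauraso and Zhao \cite{TZ} --- so your self-contained derivation from Lemmas \ref{Lem-Le} and \ref{(i)}--\ref{(v)} is necessarily a different route, and its skeleton is sound. Everything you state explicitly checks out: the observation that stuffle plus reversal only control the symmetric combination $H(1,-1;p-1)+H(-1,1;p-1)\equiv-\tfrac{p}2B_{p-3}\pmod{p^2}$; the parity split $H(1,-1;p-1)=2E-H(1,1;p-1)$ with $H(1,1;p-1)\equiv-\tfrac13pB_{p-3}\pmod{p^2}$; the diagonal $\sum_{i}1/((2i-1)i)\equiv-2H_{(p-1)/2}\pmod{p^2}$; the reduction $E\equiv\tfrac14H(1,1;\tfrac{p-1}2)+\tfrac12M-H_{(p-1)/2}$; the values of $H_{(p-1)/2}^2$ and $H(2;\tfrac{p-1}2)$; the claimed $M\equiv-4q_p(2)+2pq_p(2)^2-\tfrac7{24}pB_{p-3}\pmod{p^2}$ (which I verified; it is equivalent to \eqref{2kk}); and the final bookkeeping $-\tfrac7{12}-\tfrac7{24}+\tfrac13=-\tfrac{13}{24}$. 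One caution: you cannot obtain $M$ by quoting \eqref{2kk} or \eqref{p+12-kp2}, since the paper derives those \emph{from} Lemma \ref{TZ}; your proof must reach $M$ independently, as you intend.

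The only real gap is that the evaluation of $M$ --- which you rightly single out as the crux --- is left as an ``expected outcome''. It does go through along the line you sketch; here is one execution. Write $M=\sum_{i\le(p-1)/2}O_{i-1}/i$ with $O_{i-1}=\sum_{l<i}1/(2l-1)$, and reflect only the odd index: setting $2l-1=p-2l'$ and using $1/(p-2l')\equiv-1/(2l')-p/(2l')^2\pmod{p^2}$ gives
\begin{align*}
M\equiv-\frac12H_{(p-1)/2}^2+\frac12\Sigma_2+\frac p4\Sigma_4\pmod{p^2},
\end{align*}
where $\Sigma_2=\sum_{i}H_{(p+1)/2-i}/i$, $\Sigma_4=\sum_i H(2;(p+1)/2-i)/i$, and the cross term dies because $H(2;\tfrac{p-1}2)\equiv0\pmod p$. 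Both are sums over the \emph{symmetric} triangle $i+j\le\tfrac{p+1}2$, and this symmetry is what breaks the tautology you worried about. From $\sum_{i+j=n}1/(ij)=2H_{n-1}/n$ one gets the exact identity $\Sigma_2=2H(1,1;\tfrac{p+1}2)$, which Lehmer's congruence and Lemma \ref{(ii)} evaluate modulo $p^2$ (this reproduces \eqref{p+12-kp2} without circularity). For $\Sigma_4$, the antisymmetry of $1/(ij^2)-1/(i^2j)$ over the symmetric region forces $\sum_{i+j\le m}1/(ij^2)=\sum_{i+j\le m}1/(i^2j)$, whence $\Sigma_4=H(2,1;\tfrac{p+1}2)+2H(1,2;\tfrac{p+1}2)$; peeling off the top index $(p+1)/2$ (which contributes $-16q_p(2)$ via Lehmer) and applying Lemma \ref{(v)} gives $\Sigma_4\equiv\tfrac12B_{p-3}+2\bigl(\tfrac32B_{p-3}-8q_p(2)\bigr)=\tfrac72B_{p-3}-16q_p(2)\pmod p$, which I also confirmed numerically for $p=7,11$. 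Substituting yields exactly $M\equiv-4q_p(2)+2pq_p(2)^2-\tfrac7{24}pB_{p-3}\pmod{p^2}$, so your argument, thus completed, is a valid and fully self-contained proof of the lemma, and notably uses Lemma \ref{(v)} where the paper simply defers to \cite{TZ}.
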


\section{Proof of Theorem \ref{Thsun1}}

We will use the following WZ pair appeared in \cite{CXH-rama-2016} to prove Theorem \ref{Thsun1}. For $n, k\in\N=\{0,1,2,\ldots\}$, we define
$$
F(n,k)=\frac{(6n-2k+1)}{2^{8n-2k}}\frac{\binom{2n}n\binom{2n+2k}{n+k}\binom{2n-2k}{n-k}\binom{n+k}{n}}{\binom{2k}k}
$$
and
$$
G(n,k)=\frac{n^2\binom{2n}n\binom{2n+2k}{n+k}\binom{2n-2k}{n-k}\binom{n+k}{n}}{2^{8n-2k-4}(2n+2k-1)\binom{2k}k}.
$$
Clearly $F(n,k)=G(n,k)=0$ if $n<k$. It is easy to check that
\begin{equation}\label{FG}
F(n,k-1)-F(n,k)=G(n+1,k)-G(n,k)
\end{equation}
for all $n\in\N$ and $k\in\Z^+$.

Summing (\ref{FG}) over $n\in\{0,\ldots,(p-1)/2\}$ we get
$$
\sum_{n=0}^{(p-1)/2}F(n,k-1)-\sum_{n=0}^{(p-1)/2}F(n,k)=G\left(\frac{p+1}2,k\right)-G(0,k)=G\left(\frac{p+1}2,k\right).
$$
Furthermore, summing both side of the above identity over $k\in\{1,\ldots,(p-1)/2\}$, we obtain
\begin{align}\label{wz2}
\sum_{n=0}^{(p-1)/2}F(n,0)=F\left(\frac{p-1}2,\frac{p-1}2\right)+\sum_{k=1}^{(p-1)/2}G\left(\frac{p+1}2,k\right).
\end{align}
\begin{lem}\label{Fp12p12} Let $p>3$ be a prime. Then
\begin{align*}
&F\left(\frac{p-1}2,\frac{p-1}2\right)\\
&\equiv (-1)^{\frac{p-1}2}p\left(1-pq_p(2)+p^2q_p(2)^2-p^3q_p(2)^3-\frac7{12}p^3B_{p-3}\right)\pmod{p^5}.
\end{align*}
\end{lem}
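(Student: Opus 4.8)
The plan is to evaluate $F\big(\tfrac{p-1}2,\tfrac{p-1}2\big)$ in closed form and then expand each factor $p$-adically. Writing $m=(p-1)/2$ and substituting $n=k=m$ into the definition of $F$, the specializations $6n-2k+1=2p-1$, $8n-2k=3(p-1)$, $\binom{2n-2k}{n-k}=\binom00=1$, together with the equalities $\binom{2n}n=\binom{n+k}n=\binom{2k}k=\binom{2m}m=\binom{p-1}{(p-1)/2}$ and $\binom{2n+2k}{n+k}=\binom{4m}{2m}=\binom{2p-2}{p-1}$, collapse the binomial part to $\binom{p-1}{(p-1)/2}\binom{2p-2}{p-1}$. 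Hence
$$F\Big(\tfrac{p-1}2,\tfrac{p-1}2\Big)=\frac{2p-1}{2^{3(p-1)}}\binom{p-1}{(p-1)/2}\binom{2p-2}{p-1}.$$
The crucial simplification is the identity $\binom{2p-2}{p-1}=\frac{p}{2p-1}\binom{2p-1}{p-1}$, which follows from $(2p-1)\binom{2p-2}{p-1}=p\binom{2p-1}{p}$ and $\binom{2p-1}{p}=\binom{2p-1}{p-1}$. This cancels the awkward factor $2p-1$ and leaves the clean formula
$$F\Big(\tfrac{p-1}2,\tfrac{p-1}2\Big)=\frac{p}{2^{3(p-1)}}\binom{p-1}{(p-1)/2}\binom{2p-1}{p-1}.$$

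Because of the overall factor $p$, it now suffices to compute the quotient $\binom{p-1}{(p-1)/2}\binom{2p-1}{p-1}\big/2^{3(p-1)}$ only modulo $p^4$, so the $p^4$-precision results available to us are exactly strong enough. I would invoke Carlitz (Lemma \ref{Lemc}), namely $\binom{p-1}{(p-1)/2}\equiv(-1)^{(p-1)/2}\big(4^{p-1}+\frac{p^3}{12}B_{p-3}\big)\pmod{p^4}$, and Glaisher (Lemma \ref{Lem-Gl}), namely $\binom{2p-1}{p-1}\equiv1-\frac23p^3B_{p-3}\pmod{p^4}$. Setting $q=q_p(2)$ so that $2^{p-1}=1+pq$, I would then expand $4^{p-1}=(1+pq)^2=1+2pq+p^2q^2$ and $2^{3(p-1)}=(1+pq)^3=1+3pq+3p^2q^2+p^3q^3$.

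Multiplying the two binomial expansions modulo $p^4$ gives
$$\binom{p-1}{(p-1)/2}\binom{2p-1}{p-1}\equiv(-1)^{(p-1)/2}\Big(1+2pq+p^2q^2-\tfrac7{12}p^3B_{p-3}\Big)\pmod{p^4},$$
where the coefficient $-\frac7{12}=\frac1{12}-\frac23$ arises by combining the two Bernoulli contributions. Dividing by $(1+pq)^3$, that is, multiplying by $(1+pq)^{-3}\equiv1-3pq+6p^2q^2-10p^3q^3\pmod{p^4}$, and collecting terms, I expect the telescoped result $1-pq+p^2q^2-p^3q^3-\frac7{12}p^3B_{p-3}$; restoring the factor $p$ then yields the asserted congruence modulo $p^5$. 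The only genuine obstacle is arithmetic bookkeeping: one must carry all cross terms consistently up to order $p^3q^3$, and verify that the $-\frac23p^3B_{p-3}$ term of Glaisher and the $\frac1{12}p^3B_{p-3}$ term of Carlitz survive intact while every mixed product involving $B_{p-3}$ is pushed past $p^3$. No deeper input is required, since the precision of Carlitz and Glaisher matches the needed $p^4$ exactly.
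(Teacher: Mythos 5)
Your proposal is correct and follows essentially the same route as the paper: reduce $F\big(\tfrac{p-1}2,\tfrac{p-1}2\big)$ to the closed form $p\binom{2p-1}{p-1}\binom{p-1}{(p-1)/2}\big/2^{3p-3}$, then apply Glaisher's and Carlitz's congruences together with $2^{p-1}=1+pq_p(2)$ and expand modulo $p^4$ inside the factor of $p$. Your arithmetic checks out (and can be streamlined by noting $1+2pq+p^2q^2=(1+pq)^2$, so the quotient is just $(1+pq)^{-1}\equiv 1-pq+p^2q^2-p^3q^3 \pmod{p^4}$ plus the untouched Bernoulli term), so there is nothing to add.
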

\begin{proof} By the definition of $F(n,k)$, we have
\begin{align*}
F\left(\frac{p-1}2,\frac{p-1}2\right)&=\frac{2p-1}{2^{3p-3}}\binom{2p-2}{p-1}\binom{p-1}{(p-1)/2}=\frac{p\binom{2p-1}{p-1}\binom{p-1}{(p-1)/2}}{2^{3p-3}}.
\end{align*}
This, together with Lemma \ref{Lem-Gl}, Lemma \ref{Lemc} and the equality $2^{p-1}=1+pq_p(2)$, yields that
\begin{align*}
&F\left(\frac{p-1}2,\frac{p-1}2\right)\equiv\frac{p(1-\frac23p^3B_{p-3})(-1)^{(p-1)/2}(4^{p-1}+\frac1{12}p^3B_{p-3})}{(1+pq_p(2))^3}\\
&\equiv(-1)^{\frac{p-1}2}p\left(1-pq_p(2)+p^2q_p(2)^2-p^3q_p(2)^3-\frac7{12}p^3B_{p-3}\right)\pmod{p^5}.
\end{align*}
This concludes the proof.
\end{proof}

\begin{lem}\label{tuo} For any prime $p>3$, we have
\begin{align*}
\sum_{k=1}^{(p-1)/2}\frac{(p/2-k)}{(p+1-2k)(p+2k)}&\equiv\frac12q_p(2)-\frac{p}4q_p(2)^2-2pq_p(2)+\frac16p^2q_p(2)^3\\
&+4p^2q_p(2)+p^2q_p(2)^2+\frac{7}{48}p^2B_{p-3}\pmod{p^3}.
\end{align*}
\end{lem}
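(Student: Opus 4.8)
The plan is to reduce the sum to standard half-range harmonic sums by partial fractions, and then substitute the known expansions of those sums. Writing $p/2-k=(p-2k)/2$ and solving for the constants, I would first record the decomposition
$$\frac{p-2k}{(p+1-2k)(p+2k)}=\frac{1}{2p+1}\left(\frac{-1}{p+1-2k}+\frac{2p}{p+2k}\right),$$
so that the summand equals $\frac{1}{2(2p+1)}\big(\frac{-1}{p+1-2k}+\frac{2p}{p+2k}\big)$ and hence
$$\sum_{k=1}^{(p-1)/2}\frac{p/2-k}{(p+1-2k)(p+2k)}=\frac{1}{2(2p+1)}\left(-\sum_{k=1}^{(p-1)/2}\frac{1}{p+1-2k}+2p\sum_{k=1}^{(p-1)/2}\frac{1}{p+2k}\right).$$

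Next I would identify the two inner sums. As $k$ runs over $1,\ldots,(p-1)/2$, the value $p+1-2k$ runs over the even numbers $2,4,\ldots,p-1$, so that $\sum_k\frac{1}{p+1-2k}=\frac12H_{(p-1)/2}$ exactly. For the second sum I would expand geometrically, $\frac{1}{p+2k}=\frac1{2k}-\frac{p}{4k^2}+O(p^2/k^3)$; since this sum is multiplied by $2p$ and the answer is only required modulo $p^3$, it suffices to keep two terms, giving
$$2p\sum_{k=1}^{(p-1)/2}\frac{1}{p+2k}\equiv pH_{(p-1)/2}-\frac{p^2}{2}H\left(2;\tfrac{p-1}2\right)\pmod{p^3},$$
the cubic tail $\tfrac{p^2}8H(3;\tfrac{p-1}2)$ being of order $p^3$ after multiplication by $2p$. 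Thus the bracket reduces to $(p-\frac12)H_{(p-1)/2}-\frac{p^2}{2}H(2;\tfrac{p-1}2)$.

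Then I would insert the known values from Lemma \ref{(ii)}. With $a=1$,
$$H_{(p-1)/2}\equiv-2q_p(2)+pq_p(2)^2-\tfrac23p^2q_p(2)^3-\tfrac{7}{12}p^2B_{p-3}\pmod{p^3},$$
and with $a=2$, $H(2;\tfrac{p-1}2)\equiv\frac73pB_{p-3}\pmod{p^2}$. In particular $H(2;\tfrac{p-1}2)\equiv0\pmod p$, so after multiplication by $-\frac{p^2}{2}$ and the prefactor $\frac1{2(2p+1)}$ the $H(2;\cdot)$ term contributes only $-\frac{7}{12}p^3B_{p-3}\equiv0\pmod{p^3}$; the surviving Bernoulli term $\frac{7}{48}p^2B_{p-3}$ of the claim comes entirely from the $-\frac12H_{(p-1)/2}$ piece. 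Finally I would multiply the bracket by $\frac{1}{2(2p+1)}=\frac12(1-2p+4p^2-\cdots)$ and expand the product of the three factors $(p-\frac12)$, the harmonic expansion, and $\frac12(1-2p+4p^2)$ to order $p^2$, collecting coefficients of $q_p(2)$, $pq_p(2)$, $pq_p(2)^2$, $p^2q_p(2)$, $p^2q_p(2)^2$, $p^2q_p(2)^3$ and $p^2B_{p-3}$; this should reproduce the claimed right-hand side.

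The only real obstacle is bookkeeping rather than ideas. One must carry $H_{(p-1)/2}$ all the way to modulus $p^3$ (which is precisely why Lemma \ref{(ii)} is needed in place of the weaker Lemma \ref{Lem-Le}), confirm that the omitted tail of the geometric series and the $H(2;\cdot)$ term genuinely fall beyond $p^3$, and expand the three factors simultaneously to the correct order without dropping a cross term. Once the partial-fraction reduction is in place, no individual step is conceptually difficult.
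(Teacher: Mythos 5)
Your proposal is correct, and it takes a genuinely different route from the paper. You begin from the exact partial-fraction identity
$$\frac{p-2k}{(p+1-2k)(p+2k)}=\frac{1}{2p+1}\left(\frac{-1}{p+1-2k}+\frac{2p}{p+2k}\right),$$
note that $\sum_{k=1}^{(p-1)/2}1/(p+1-2k)=\frac12H_{(p-1)/2}$ is an exact identity, and expand $1/(p+2k)$ geometrically; this reduces everything to $H_{(p-1)/2}$ modulo $p^3$ and $H\left(2;\frac{p-1}2\right)$ modulo $p$, both supplied by Lemma \ref{(ii)}, after which the final expansion of $\frac{2p-1}{4(2p+1)}=-\frac14+p-2p^2+O(p^3)$ against the seven-term expansion of $H_{(p-1)/2}$ reproduces every coefficient of the claim (I have checked this term by term). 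The paper instead expands the summand $p$-adically into $-\frac14H_{(p-1)/2}-\frac p2\sum_k\frac1{k(2k-1)}-p^2\sum_k\frac1{k(2k-1)^2}$ and evaluates the two auxiliary sums via Lehmer's Lemma \ref{Lem-Le} and the congruences (\ref{k2k-1}), (\ref{k2k-12}); those auxiliary congruences are not wasted effort there, since they are reused in Lemmas \ref{Lemhk}--\ref{Lemh2kk}, whereas your decomposition avoids them entirely and is self-contained. One further point in your favor: you explicitly observe that the Bernoulli term $\frac7{48}p^2B_{p-3}$ forces one to know $H_{(p-1)/2}$ to modulus $p^3$, i.e.\ that Lemma \ref{(ii)} with $a=1$ is indispensable; the paper's own proof cites only Lemma \ref{Lem-Le} (a mod $p^2$ statement) at this step, so your version actually makes precise a citation the paper leaves slightly loose.
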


\begin{proof} In view of Lemma \ref{Lem-Le},
\begin{align}\label{k2k-1}
&\sum_{k=1}^{(p-1)/2}\frac1{k(2k-1)}\notag\\
&=2\sum_{k=1}^{(p-1)/2}\frac1{2k-1}-H_{(p-1)/2}\equiv4q_p(2)-2pq_p(2)^2\pmod{p^2}
\end{align}
and
\begin{align}\label{k2k-12}
\sum_{k=1}^{(p-1)/2}\frac1{k(2k-1)^2}&=H_{(p-1)/2}-2\sum_{k=1}^{(p-1)/2}\frac1{2k-1}+2\sum_{k=1}^{(p-1)/2}\frac1{(2k-1)^2}\notag\\
&\equiv-4q_p(2)+\frac12H(2;(p-1)/2)\notag\\
&\equiv-4q_p(2)\pmod p.
\end{align}
It is easy to see that
\begin{align*}
&\sum_{k=1}^{(p-1)/2}\frac{(p/2-k)}{(p+1-2k)(p+2k)}\\
&\equiv-\frac14H_{(p-1)/2}-\frac{p}2\sum_{k=1}^{(p-1)/2}\frac1{k(2k-1)}-p^2\sum_{k=1}^{(p-1)/2}\frac1{k(2k-1)^2}.
\end{align*}
Then we immediately obtain the desired result by Lemma \ref{Lem-Le}, (\ref{k2k-1}) and (\ref{k2k-12}).
\end{proof}

\begin{lem}\label{Lemhk} For any prime $p>3$, we have
\begin{align*}
&\sum_{k=1}^{(p-1)/2}\frac{(p/2-k)H_k}{(p+1-2k)(p+2k)}\\
&\equiv2q_p(2)-q_p(2)^2-6pq_p(2)+2pq_p(2)^2+pq_p(2)^3+\frac7{12}pB_{p-3}\pmod{p^2}.
\end{align*}
\end{lem}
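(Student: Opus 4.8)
The plan is to follow the proof of Lemma \ref{tuo}, carrying the extra factor $H_k$ through the same expansion of the summand in powers of $p$. Writing $(p+1-2k)(p+2k)=-2k(2k-1)+p+p^2$ and expanding the geometric series, one finds
\[
\frac{p/2-k}{(p+1-2k)(p+2k)}=\frac{1}{2(2k-1)}+p\,\frac{1-k}{2k(2k-1)^2}+O(p^2).
\]
Multiplying by $H_k$, summing over $k$ from $1$ to $(p-1)/2$, and discarding the $O(p^2)$ tail (which is $p$-integral) reduces the lemma modulo $p^2$ to the combination
\[
\frac12\sum_{k=1}^{(p-1)/2}\frac{H_k}{2k-1}+\frac p2\sum_{k=1}^{(p-1)/2}\frac{H_k}{k(2k-1)^2}-\frac p2\sum_{k=1}^{(p-1)/2}\frac{H_k}{(2k-1)^2}.
\]
Thus everything comes down to the weight-two sum $A:=\sum_{k}H_k/(2k-1)$ modulo $p^2$ together with two sums modulo $p$. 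The sum $\sum_k H_k/(k(2k-1)^2)$ I would reduce, via the partial fraction $1/(k(2k-1)^2)=1/k-2/(2k-1)+2/(2k-1)^2$, to $A$, to $\sum_k H_k/(2k-1)^2$ and to $\sum_k H_k/k=\frac12(H_{(p-1)/2}^2+H(2;(p-1)/2))$, the last being immediate from Lemma \ref{(ii)}.

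The heart of the matter is $A$ modulo $p^2$. Splitting $1/(2k-1)=1/(2k)+1/(2k(2k-1))$ merely reproduces $A$, so instead I would connect this half-range odd sum to full-range double harmonic sums. Splitting the outer index of $H(1,1;p-1)=\sum_{a<b}1/(ab)$ and of $H(1,-1;p-1)=\sum_{a<b}(-1)^b/(ab)$ according to the parity of $b$ gives
\[
\sum_{k=1}^{(p-1)/2}\frac{H_{2k-2}}{2k-1}=\frac12\big(H(1,1;p-1)-H(1,-1;p-1)\big),
\]
whose right-hand side is known modulo $p^2$: $H(1,1;p-1)=\frac12(H_{p-1}^2-H(2;p-1))\equiv-\frac13pB_{p-3}$ by Wolstenholme's theorem and Lemma \ref{(i)}, while $H(1,-1;p-1)$ is supplied by Lemma \ref{TZ}. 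Converting $H_{2k-2}=\frac12H_k+O_k-1/(2k-1)-1/(2k)$, with $O_k:=\sum_{i\le k}1/(2i-1)$, and evaluating the resulting odd-power pieces $\sum_k O_k/(2k-1)=\frac12((O^{(1)})^2+O^{(2)})$, $\sum_k1/(k(2k-1))$ and $O^{(s)}:=\sum_k(2k-1)^{-s}$ by Lehmer's Lemma \ref{Lem-Le}, Lemma \ref{(i)} and Lemma \ref{(ii)} (in particular $O^{(1)}\equiv q_p(2)-\frac12pq_p(2)^2$ and $O^{(2)}\equiv\frac1{12}pB_{p-3}$), I would solve the resulting linear relation to obtain
\[
A\equiv4q_p(2)-2q_p(2)^2-2pq_p(2)^2+2pq_p(2)^3+\tfrac7{24}pB_{p-3}\pmod{p^2}.
\]

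The last ingredient is $\sum_k H_k/(2k-1)^2\pmod p$, which I expect to equal $-4q_p(2)+\frac78B_{p-3}$; this I would obtain by the same parity-splitting device modulo $p$, reducing it to $H(1,2;(p-1)/2)$ (of odd weight $3$, so Lemma \ref{(v)} applies) and $H(1,2;p-1)$ from Lemma \ref{(iii)}. Substituting these values into the combination above then yields the stated congruence. The main obstacle is precisely the circularity in the odd-denominator sum $A$: the identity expressing $\sum_k H_{2k-2}/(2k-1)$ through the full-range sums $H(1,1;p-1)$ and $H(1,-1;p-1)$ is what dissolves it, and the delicate part is then the bookkeeping required to retain all $q_p(2)^3$ and $B_{p-3}$ contributions to the correct $p$-adic order.
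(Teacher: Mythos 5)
Your proposal is correct, and it takes a genuinely different route from the paper. The paper does not expand the kernel $p$-adically in place; instead it applies the substitution $k\mapsto(p+1)/2-k$, which turns the sum into $-\frac14\sum_k H_{(p+1)/2-k}/k-\frac p2\sum_k H_{(p+1)/2-k}/(k(2k-1))$, and then invokes a reflection formula expressing $H_{(p+1)/2-k}$ through $H_{2k}$, $H_k$, $H(2;2k)$, $H(2;k)$ and $H_{(p-1)/2}$ modulo $p^2$; the resulting sums $\sum_k H_{2k}/k$ and $\sum_k H_k/k$ are evaluated via $H(1,1;p-1)+H(1,-1;p-1)$ (so Lemma \ref{TZ} enters there, just as it enters your computation of $A$ through the difference $H(1,1;p-1)-H(1,-1;p-1)$), supplemented by congruences imported from \cite[(2.2), (2.3)]{mw-ijnt-2019}. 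Your route instead reduces everything to the odd-denominator sums $A=\sum_k H_k/(2k-1)$ modulo $p^2$ and $\sum_k H_k/(2k-1)^2$, $\sum_k H_k/(k(2k-1)^2)$ modulo $p$, and evaluates them by parity splitting of full-range multiple harmonic sums. I checked your intermediate values: the expansion of the kernel, the identity $\sum_k H_{2k-2}/(2k-1)=\frac12(H(1,1;p-1)-H(1,-1;p-1))$, the resulting $A\equiv 4q_p(2)-2q_p(2)^2-2pq_p(2)^2+2pq_p(2)^3+\frac7{24}pB_{p-3}\pmod{p^2}$, and $\sum_k H_k/(2k-1)^2\equiv-4q_p(2)+\frac78B_{p-3}\pmod p$ are all correct, and substituting them back reproduces the stated congruence exactly. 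Two remarks. First, for the weight-3 step you should cite Lemma \ref{(iv)} explicitly: isolating the odd-indexed blocks requires the alternating sums $H(1,-2;p-1)$ and $H(-1,2;p-1)$ (e.g.\ the odd-odd double sum is obtained from $\frac18H(1,2;\frac{p-1}2)-\frac12\bigl(H(-1,2;p-1)+H(1,-2;p-1)\bigr)\equiv-\frac1{16}B_{p-3}$), not just $H(1,2;p-1)$ and $H(1,2;\frac{p-1}2)$. Second, as a trade-off: your argument is self-contained within the Section 2 lemmas and avoids both the reflection formula and the external results of \cite{mw-ijnt-2019}, which is cleaner for this lemma in isolation; the paper's machinery, on the other hand, produces the intermediate congruences (\ref{2kk}), (\ref{kk}), (\ref{p+12-kp2}) and (\ref{p+12-k}), which are reused heavily in Lemmas \ref{Lemh2k}--\ref{Lemh2kk}, so its overhead is amortized across the rest of the proof of Theorem \ref{Thsun1}.
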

\begin{proof} By Lemmas \ref{(i)} and \ref{(ii)},  and (\ref{TZp2}), we have
\begin{align}\label{2kk}
\sum_{k=1}^{(p-1)/2}\frac{H_{2k}}k&=\sum_{k=1}^{p-1}\frac{(1+(-1)^k)H_{k}}k\notag\\
&=H(1,1;p-1)+H(1,-1;p-1)+\frac12H(2;(p-1)/2)\notag\\
&\equiv q_p(2)^2-pq_p(2)^3+\frac{7}{24}pB_{p-3}\pmod{p^2}.
\end{align}
Noting $2H(1,1;n)=H_n^2-H(2;n)$, we get
\begin{align}\label{kk}
\sum_{k=1}^{(p-1)/2}\frac{H_{k}}k&=H(1,1;(p-1)/2)+H(2;(p-1)/2)\notag\\&=\frac12H_{(p-1)/2}^2+\frac12H(2;(p-1)/2)\notag\\
&\equiv2q_p(2)^2-2pq_p(2)^3+\frac76pB_{p-3}\pmod{p^2}.
\end{align}
It is easy to see that
\begin{align*}
&H_{(p+1)/2-k}\\
&\equiv\frac2{p+1-2k}+2pH(2;2k)-\frac{p}2H(2;k)+H_{(p-1)/2}+2H_{2k}-H_k\pmod{p^2}.
\end{align*}
This, together with (\ref{2kk}), (\ref{kk}) and \cite[(2.2), (2.3)]{mw-ijnt-2019}, yields that
\begin{align}\label{p+12-kp2}
\sum_{k=1}^{(p-1)/2}\frac{H_{(p+1)/2-k}}k&\equiv-8q_p(2)+4q_p(2)^2+4pq_p(2)^2+8pq_p(2)\notag\\
&-4pq_p(2)^3-\frac73pB_{p-3}\pmod{p^2}
\end{align}
and
\begin{align}\label{p+12-k}
\sum_{k=1}^{(p-1)/2}\frac{H_{(p+1)/2-k}}{k(2k-1)}&\equiv\sum_{k=1}^{(p-1)/2}\frac{H_k}{k(2k-1)}\notag\\
&\equiv-\sum_{k=1}^{(p-1)/2}\frac{H_{(p+1)/2-k}}k-\sum_{k=1}^{(p-1)/2}\frac{H_{k}}k\notag\\
&\equiv8q_p(2)-6q_p(2)^2\pmod p.
\end{align}
Since
\begin{align*}
&\sum_{k=1}^{(p-1)/2}\frac{(p/2-k)H_k}{(p+1-2k)(p+2k)}\\
&\equiv-\frac14\sum_{k=1}^{(p-1)/2}\frac{H_{(p+1)/2-k}}k
-\frac{p}2\sum_{k=1}^{(p-1)/2}\frac{H_{(p+1)/2-k}}{k(2k-1)}\pmod{p^2},
\end{align*}
we immediately get the desired result with the aids of (\ref{p+12-kp2}) and (\ref{p+12-k}).
\end{proof}

\begin{lem}\label{Lemh2k} Let $p>3$ be a prime. Then 
\begin{align*}
&\sum_{k=1}^{(p-1)/2}\frac{(p/2-k)H_{2k}}{(p+1-2k)(p+2k)}\\
&\equiv q_p(2)-\frac14q_p(2)^2-3pq_p(2)+\frac{p}2q_p(2)^2+\frac{p}4q_p(2)^3+\frac{13}{32}pB_{p-3}\pmod{p^2}.
\end{align*}
\end{lem}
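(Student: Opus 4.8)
The plan is to follow the proof of the preceding Lemma~\ref{Lemhk} essentially verbatim, since the present sum differs only in that $H_{2k}$ replaces $H_k$. First I would make the substitution $k\mapsto(p+1)/2-k$ in the summation index. Under it one has $p/2-k\mapsto(2k-1)/2$, $p+1-2k\mapsto 2k$, $p+2k\mapsto 2p+1-2k$ and, crucially, $H_{2k}\mapsto H_{p+1-2k}$. Expanding the resulting rational kernel as $\frac{2k-1}{4k(2p+1-2k)}=-\frac1{4k}-\frac{p}{2k(2k-1)}-\frac{p^2}{k(2k-1)^2}+O(p^3)$ and discarding the $p^2$-term modulo $p^2$ gives
$$\sum_{k=1}^{(p-1)/2}\frac{(p/2-k)H_{2k}}{(p+1-2k)(p+2k)}\equiv-\frac14\sum_{k=1}^{(p-1)/2}\frac{H_{p+1-2k}}{k}-\frac{p}2\sum_{k=1}^{(p-1)/2}\frac{H_{p+1-2k}}{k(2k-1)}\pmod{p^2},$$
which is the exact analogue of the combination used in Lemma~\ref{Lemhk}. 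Call the two sums on the right $S_1$ (needed mod $p^2$) and $S_2$ (needed only mod $p$, thanks to the factor $p$).

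Next I would trade the backward sum $H_{p+1-2k}$ for a forward one by reflection. Writing $p+1-2k=p-(2k-1)$ and using $\frac1{p-\ell}\equiv-\frac1\ell-\frac{p}{\ell^2}\pmod{p^2}$ together with Wolstenholme's $H_{p-1}\equiv0\pmod{p^2}$, one obtains $H_{p+1-2k}\equiv H_{2k-2}+pH(2;2k-2)\pmod{p^2}$; then $H_{2k-2}=H_{2k}-\frac1{2k-1}-\frac1{2k}$ and $H(2;2k-2)=H(2;2k)-\frac1{(2k-1)^2}-\frac1{4k^2}$. Substituting reduces $S_1$ and $S_2$ to the quantities $\sum_k\frac{H_{2k}}k$, $\sum_k\frac1{k(2k-1)}$, $H(2;\frac{p-1}2)$, $\sum_k\frac{H(2;2k)}k$, $\sum_k\frac1{k(2k-1)^2}$, $H(3;\frac{p-1}2)$, and $\sum_k\frac{H_{2k}}{k(2k-1)}$.

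Most of these are already available: $\sum_k\frac{H_{2k}}k$ is \eqref{2kk}, $\sum_k\frac1{k(2k-1)}$ is \eqref{k2k-1}, $\sum_k\frac1{k(2k-1)^2}$ is \eqref{k2k-12}, while $H(2;\frac{p-1}2)$ and $H(3;\frac{p-1}2)$ come from Lemma~\ref{(ii)}. The two genuinely new evaluations, both modulo $p$, are the weight-three sum $\sum_k\frac{H(2;2k)}k$ and the sum $\sum_k\frac{H_{2k}}{k(2k-1)}$. For the former I would use the even-index trick $\sum_k\frac{H(2;2k)}k=\sum_{n=1}^{p-1}\frac{(1+(-1)^n)H(2;n)}n$ and split each piece diagonally into $H(2,1;p-1)+H(3;p-1)$ and $H(2,-1;p-1)+H(-3;p-1)$, evaluated via Lemmas~\ref{(iii)}, \ref{(i)}, \ref{(iv)} (the alternating cube $H(-3;p-1)$ being obtained from $H(3;\frac{p-1}2)$). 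For the latter, the partial fraction $\frac1{k(2k-1)}=\frac2{2k-1}-\frac1k$ reduces it to $\sum_k\frac{H_{2k}}k$ and $\sum_k\frac{H_{2k}}{2k-1}$, and the odd-index piece is handled by $\frac{H_{2k}}{2k-1}=\frac{H_{2k-1}}{2k-1}+\frac1{2k(2k-1)}$, i.e.\ by $\sum_{n\ \mathrm{odd}}\frac{H_n}n$ and the alternating harmonic sum.

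The main obstacle I anticipate is precisely this mod-$p$ bookkeeping for the two new sums: correctly separating the even- and odd-indexed (alternating) multiple harmonic sums and tracking the $q_p(2)$, $q_p(2)^2$ and $B_{p-3}$ contributions through the reflection step, where small arithmetic slips in the $B_{p-3}$ coefficients are easy to make. Once the seven quantities above are assembled into $-\frac14S_1-\frac p2S_2$, the stated congruence should drop out; one expects $S_1\equiv-4q_p(2)+q_p(2)^2+4pq_p(2)+2pq_p(2)^2-pq_p(2)^3-\frac{13}8pB_{p-3}\pmod{p^2}$ and $S_2\equiv4q_p(2)-2q_p(2)^2\pmod p$, which combine to give the right-hand side of the lemma.
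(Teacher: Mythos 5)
Your proposal follows the paper's own proof essentially verbatim: the same reflection $k\mapsto(p+1)/2-k$ with the same kernel expansion yielding $-\frac14 S_1-\frac p2 S_2$, the same Wolstenholme-based identity $H_{p+1-2k}\equiv H_{2k-2}+pH(2;2k-2)\pmod{p^2}$, and the same reduction to \eqref{2kk}, \eqref{k2k-1}, \eqref{k2k-12} and Lemma \ref{(ii)}; indeed your expected values for $S_1$ and $S_2$ are exactly the paper's \eqref{p+12-2kp2} and \eqref{p+12-2k}. The only cosmetic difference is that you derive the auxiliary evaluation $\sum_{k=1}^{(p-1)/2}H(2;2k)/k\equiv-\frac54B_{p-3}\pmod p$ internally from Lemmas \ref{(i)}, \ref{(ii)}, \ref{(iii)} and \ref{(iv)}, whereas the paper imports it from \cite[(2.3)]{mw-ijnt-2019}.
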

\begin{proof}
It is easy to check that
$$H_{p+1-2k}\equiv pH(2;2k-2)+H_{2k-2}\pmod{p^2}.$$
So
\begin{align*}
&\sum_{k=1}^{(p-1)/2}\frac{H_{p+1-2k}}k\equiv p\sum_{k=1}^{(p-1)/2}\frac{H(2;2k-2)}k+\sum_{k=1}^{(p-1)/2}\frac{H_{2k-2}}k\\
=&p\left(\sum_{k=1}^{(p-1)/2}\frac{H(2;2k)}k-\sum_{k=1}^{(p-1)/2}\frac1{k(2k-1)^2}-\frac14H(3;(p-1)/2)\right)\\
&+\left(\sum_{k=1}^{(p-1)/2}\frac{H_{2k}}k-\sum_{k=1}^{(p-1)/2}\frac1{k(2k-1)}-\frac12H(2;(p-1)/2)\right).
\end{align*}
Combing this with Lemma \ref{(ii)}, (\ref{2kk}), (\ref{k2k-1}), (\ref{k2k-12}) and \cite[(2.3)]{mw-ijnt-2019}, we get
\begin{align}\label{p+12-2kp2}
\sum_{k=1}^{(p-1)/2}\frac{H_{p+1-2k}}k&\equiv q_p(2)^2-4q_p(2)+4pq_p(2)+2pq_p(2)^2\notag\\
&-pq_p(2)^3-\frac{13}8pB_{p-3}\pmod{p^2}
\end{align}
and
\begin{align}\label{p+12-2k}
\sum_{k=1}^{(p-1)/2}\frac{H_{p+1-2k}}{k(2k-1)}\equiv\sum_{k=1}^{(p-1)/2}\frac{H_{2k-2}}{k(2k-1)}\equiv4q_p(2)-2q_p(2)^2\pmod p.
\end{align}
Since
\begin{align*}
\sum_{k=1}^{(p-1)/2}\frac{(p/2-k)H_{2k}}{(p+1-2k)(p+2k)}
=-\frac14\sum_{k=1}^{(p-1)/2}\frac{H_{p+1-2k}}k-\frac{p}2\sum_{k=1}^{(p-1)/2}\frac{H_{p+1-2k}}{k(2k-1)},
\end{align*}
we obtain the desired result by using (\ref{p+12-2kp2}) and (\ref{p+12-2k}).
\end{proof}

\begin{lem}\label{Lemhk2} For any prime $p>3$, we have
$$
\sum_{k=1}^{(p-1)/2}\frac{(p/2-k)H_{k}^2}{(p+1-2k)(p+2k)}\equiv4q_p(2)-6q_p(2)^2+2q_p(2)^3+\frac18B_{p-3}\pmod p.
$$
\end{lem}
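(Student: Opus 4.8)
The plan is to follow the same two-step pattern used in Lemmas~\ref{Lemhk} and~\ref{Lemh2k}: first replace the rational kernel by its reduction modulo $p$, then expand the harmonic factor. Since only a congruence modulo $p$ is needed, I would apply the bijection $k\mapsto\frac{p+1}2-k$ of $\{1,\dots,\frac{p-1}2\}$. Under it the kernel $\frac{p/2-k}{(p+1-2k)(p+2k)}$ becomes $\frac{(2k-1)/2}{2k(2p+1-2k)}\equiv-\frac1{4k}\pmod p$, while $H_k^2$ becomes $H_{(p+1)/2-k}^2$; hence
\[
\sum_{k=1}^{(p-1)/2}\frac{(p/2-k)H_k^2}{(p+1-2k)(p+2k)}\equiv-\frac14\sum_{k=1}^{(p-1)/2}\frac{H_{(p+1)/2-k}^2}{k}\pmod p .
\]

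Next I would insert the expansion from the proof of Lemma~\ref{Lemhk}, reduced modulo $p$, namely $H_{(p+1)/2-k}\equiv H_{(p-1)/2}+2H_{2k}-H_k-\frac2{2k-1}\pmod p$, together with $H_{(p-1)/2}\equiv-2q_p(2)$. Squaring and dividing by $k$ splits the sum into ten pieces. Seven of them involve only quantities already at hand: $\sum_k\frac1{k(2k-1)}$ and $\sum_k\frac1{k(2k-1)^2}$ from \eqref{k2k-1} and \eqref{k2k-12}, the sums $\sum_k\frac{H_k}k$ and $\sum_k\frac{H_{2k}}k$ from \eqref{kk} and \eqref{2kk}, the sum $\sum_k\frac{H_k}{k(2k-1)}$ from \eqref{p+12-k}, and $\sum_k\frac{H_{2k}}{k(2k-1)}$ (obtained from \eqref{p+12-2k}, \eqref{k2k-12} and the elementary $\sum_k\frac1{k^2(2k-1)}$ via $H_{2k}=H_{2k-2}+\frac1{2k-1}+\frac1{2k}$). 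The remaining three pieces are the new weight-$3$ half-range sums $\sum_k\frac{H_k^2}k$, $\sum_k\frac{H_{2k}^2}k$ and $\sum_k\frac{H_kH_{2k}}k$.

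For $\sum_k\frac{H_k^2}k$ I would write $H_k^2=2H(1,1;k)+H(2;k)$ and use the stuffle relations $\sum_k\frac{H(1,1;k)}k=H(1,1,1;\frac{p-1}2)+H(1,2;\frac{p-1}2)$ and $\sum_k\frac{H(2;k)}k=H(2,1;\frac{p-1}2)+H(3;\frac{p-1}2)$; here the depth-$2$ values come from Lemma~\ref{(v)}, the value $H(3;\frac{p-1}2)$ from Lemma~\ref{(ii)}, and $H(1,1,1;\frac{p-1}2)$ from Newton's identity applied to $H(1;\frac{p-1}2),H(2;\frac{p-1}2),H(3;\frac{p-1}2)$. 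For $\sum_k\frac{H_{2k}^2}k$ I would use the parity identity $\sum_k\frac{H_{2k}^2}k=\sum_{m=1}^{p-1}\frac{(1+(-1)^m)H_m^2}m$, reducing it to the full-range sum $\sum_{m=1}^{p-1}\frac{H_m^2}m$ (whose depth-$2$ part is given by Lemma~\ref{(iii)}, the depth-$1$ and depth-$3$ parts vanishing modulo $p$ by Wolstenholme-type congruences) and to the alternating sum $\sum_{m=1}^{p-1}\frac{(-1)^mH_m^2}m$ (evaluated through Lemma~\ref{(iv)}, which supplies $H(1,1,-1;p-1)$, $H(1,-2;p-1)$ and $H(2,-1;p-1)$, together with the elementary $\sum_m\frac{(-1)^m}{m^3}$).

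The genuine obstacle is the mixed sum $\sum_k\frac{H_kH_{2k}}k$, which is not directly a multiple harmonic sum. Writing $H_{2k}=\frac12H_k+O_k$ with $O_k=\sum_{j=1}^k\frac1{2j-1}$ reduces it to $\frac12\sum_k\frac{H_k^2}k$ (already found) plus $\sum_k\frac{O_kH_k}k$; here no parity trick simplifies the latter, because $H_k$ does not lift to the full range. To evaluate $\sum_k\frac{O_kH_k}k$ modulo $p$ I would expand $O_kH_k=\sum_{i,j\le k}\frac1{i(2j-1)}$, order the pair $(i,j)$ against $k$ to obtain nested sums with one odd denominator, and then convert the odd part by the reflection $O_k\equiv\frac12(H_{(p-1)/2-k}-H_{(p-1)/2})\pmod p$ (equivalently by splitting $\frac1{2j-1}$ into even and odd parts), after which Lemmas~\ref{(iv)} and~\ref{(v)} close the computation. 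Assembling the ten pieces and multiplying by $-\frac14$, the $q_p(2)$- and $q_p(2)^2$-contributions should combine to $4q_p(2)-6q_p(2)^2$ and the $q_p(2)^3$- and $B_{p-3}$-contributions to $2q_p(2)^3+\frac18B_{p-3}$, giving the claim. The delicate steps are the correct evaluation of $\sum_k\frac{O_kH_k}k$ and the bookkeeping of the many rational coefficients.
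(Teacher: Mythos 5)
Your strategy is genuinely different from the paper's, and most of it is sound: the reflection $k\mapsto\frac{p+1}2-k$ of the kernel, the expansion of $H_{(p+1)/2-k}^2$, the stuffle evaluation of $\sum_k H_k^2/k$ (via $H_k^2=2H(1,1;k)+H(2;k)$, Lemmas \ref{(ii)}, \ref{(v)} and Newton's identity), and the parity evaluation of $\sum_k H_{2k}^2/k$ (via Lemmas \ref{(iii)}, \ref{(iv)} and the elementary $H(-3;p-1)\equiv-\frac12B_{p-3}$) all check out. By contrast, the paper's own proof avoids every weight-three Euler sum: it reduces the kernel to $\frac1{2(2k-1)}$, shifts the index to write $\sum_{k\ge1}H_k^2/(2k-1)=\sum_{k\ge0}H_{k+1}^2/(2k+1)$, expands $H_{k+1}=H_k+\frac1{k+1}$, and then quotes the known congruence \cite[(1.1)]{mw-ijnt-2019} for the resulting sum $\sum_k H_k^2/(2k+1)$, together with (\ref{Lem-H}), Lemma \ref{(ii)}, (\ref{k2k-1}) and (\ref{p+12-kp2}).

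The gap is exactly where you place it, namely $\sum_k H_kH_{2k}/k$, and it is more serious than your sketch suggests. First, the reflection branch is circular: writing $\sum_k O_kH_k/k\equiv\frac12\sum_k H_{(p-1)/2-k}H_k/k-\frac12H_{(p-1)/2}\sum_k H_k/k$ and then using the only available congruence for the reflected term, $H_{(p-1)/2-k}\equiv H_{(p-1)/2}+2H_{2k}-H_k\pmod p$ (which is literally the same identity as your formula for $O_k$, read backwards), returns $\sum_k H_kH_{2k}/k$ on the right and yields the tautology $0=0$; no evaluation comes out. Second, the parity branch ("splitting $\frac1{2j-1}$ into even and odd parts") does work in principle, but it is not equivalent to the reflection: doubling indices turns $\sum_{i,j\le k}1/(i(2j-1)k)$ into $\frac12\sum_{a,b\le c\le p-1}(1+(-1)^a)(1-(-1)^b)(1+(-1)^c)/(abc)$, and expanding the product produces all eight sign patterns, hence essentially the complete table of weight-three alternating multiple harmonic sums mod $p$ — including $H(1,-1,1;p-1)$, $H(-1,1,1;p-1)$, $H(1,-1,-1;p-1)$, $H(-1,-2;p-1)$, etc. — of which Lemma \ref{(iv)} supplies only five; Lemma \ref{(v)} (half-range, depth two) does not supply the rest. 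Those missing values can be recovered by stuffle relations using $H(1;p-1)\equiv H(2;p-1)\equiv H(1,1;p-1)\equiv0$, $H(-1;p-1)\equiv-2q_p(2)$ and $H(1,-1;p-1)\equiv q_p(2)^2$ (Lemma \ref{TZ}), but none of this is in your plan, and it is precisely the work the paper outsources: when $\sum_k H_kH_{2k}/k$ is actually needed (Lemma \ref{Lemhkh2k}) the paper cites \cite[Lemma 3.2]{mao-ijnt-2017} and \cite[Theorem 1.3]{mw-ijnt-2019} rather than computing it. Finally, since you never carry out the assembly, the claimed coefficients $4,-6,2,\frac18$ remain unverified; as written, the proposal is a plausible program with one failing branch and one incomplete branch at its critical step, not a proof.
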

\begin{proof} It is easy to verify that
\begin{align*}
&\sum_{k=1}^{(p-1)/2}\frac{(p/2-k)H_{k}^2}{(p+1-2k)(p+2k)}\equiv\frac12\sum_{k=1}^{(p-1)/2}\frac{H_k^2}{2k-1}=\frac12\sum_{k=0}^{(p-3)/2}\frac{H_{k+1}^2}{2k+1}\\
&=\frac12\sum_{k=0}^{(p-3)/2}\frac{H_k^2}{2k+1}+\frac12\sum_{k=1}^{(p-1)/2}\frac1{(2k-1)k^2}+\sum_{k=1}^{(p-1)/2}\frac{H_{k-1}}{k(2k-1)}.
\end{align*}
Observe that
\begin{align}\label{2k-2k2}
&\frac12\sum_{k=1}^{(p-1)/2}\frac1{(2k-1)k^2}\notag\\
&=2\sum_{k=1}^{(p-1)/2}\frac1{2k-1}-H_{(p-1)/2}-\frac12H(2;(p-1)/2)
\end{align}
and
\begin{align*}
&\sum_{k=1}^{(p-1)/2}\frac{H_{k-1}}{k(2k-1)}=2\sum_{k=1}^{(p-1)/2}\frac{H_{k-1}}{2k-1}-\sum_{k=1}^{(p-1)/2}\frac{H_{k-1}}{k}\\
&=2\sum_{k=1}^{(p-1)/2}\frac{H_{k}}{2k-1}-2\sum_{k=1}^{(p-1)/2}\frac{1}{k(2k-1)}-\sum_{k=1}^{(p-1)/2}\frac{H_{k-1}}{k}\\
&\equiv-\sum_{k=1}^{(p-1)/2}\frac{H_{(p+1)/2-k}}{k}-2\sum_{k=1}^{(p-1)/2}\frac{1}{k(2k-1)}-\sum_{k=1}^{(p-1)/2}\frac{H_{k-1}}{k}.
\end{align*}
This, together with (\ref{Lem-H}), Lemma \ref{(ii)}, (\ref{k2k-1}), (\ref{p+12-kp2}) and \cite[(1.1)]{mw-ijnt-2019}, yields the desired result.
\end{proof}

\begin{lem}\label{Lemhkh2k} Let $p>3$ be a prime. Then
$$
\sum_{k=1}^{(p-1)/2}\frac{(p/2-k)H_{k}H_{2k}}{(p+1-2k)(p+2k)}\equiv2q_p(2)-\frac52q_p(2)^2+\frac12q_p(2)^3+\frac5{16}B_{p-3}\pmod p.
$$
\end{lem}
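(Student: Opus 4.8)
The plan is to imitate the proof of Lemma~\ref{Lemhk2}. Since the asserted congruence is only modulo $p$, I first discard all higher-order parts of the rational coefficient: expanding $\frac{p/2-k}{(p+1-2k)(p+2k)}$ and reducing gives $\frac{p/2-k}{(p+1-2k)(p+2k)}\equiv\frac{1}{2(2k-1)}\pmod p$, exactly as at the start of the proof of Lemma~\ref{Lemhk2}. Hence
$$\sum_{k=1}^{(p-1)/2}\frac{(p/2-k)H_kH_{2k}}{(p+1-2k)(p+2k)}\equiv S:=\frac12\sum_{k=1}^{(p-1)/2}\frac{H_kH_{2k}}{2k-1}\pmod p,$$
and everything reduces to evaluating $S$ modulo $p$.

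Next I trade the factor $\frac1{2k-1}$ for $\frac1k$ by the reflection $k\mapsto(p+1)/2-k$ on $\{1,\dots,(p-1)/2\}$. Here $2k-1\equiv-2j\pmod p$, while $H_k=H_{(p+1)/2-j}$ and $H_{2k}=H_{p+1-2j}\equiv H_{2j-2}\pmod p$, the last step using $H_{p-1-m}\equiv H_m\pmod p$. This turns $S$ into $-\frac14\sum_{j=1}^{(p-1)/2}\frac{H_{(p+1)/2-j}H_{2j-2}}{j}$. I now insert the two mod-$p$ expansions $H_{(p+1)/2-j}\equiv-\frac2{2j-1}-2q_p(2)+2H_{2j}-H_j$ (which follows from the displayed formula in the proof of Lemma~\ref{Lemhk} together with $H_{(p-1)/2}\equiv-2q_p(2)\pmod p$ from \eqref{Lem-H}) and $H_{2j-2}=H_{2j}-\frac1{2j-1}-\frac1{2j}$, and multiply out. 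Every summand carrying at most one harmonic factor is a weight-at-most-two sum already evaluated in this section, via \eqref{2kk}, \eqref{kk}, \eqref{k2k-1}, \eqref{k2k-12} and Lemmas~\ref{(i)} and~\ref{(ii)}; what remains are the two genuinely weight-three sums $V:=\sum_{k=1}^{(p-1)/2}\frac{H_{2k}^2}{k}$ and $U:=\sum_{k=1}^{(p-1)/2}\frac{H_kH_{2k}}{k}$, and a short computation gives $S\equiv-\tfrac12V+\tfrac14U+(\text{known lower-weight terms})\pmod p$.

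The evaluation of $V$ and $U$ is the crux, and I expect it to be the main obstacle. For both I pass from the half range to the full range $\{1,\dots,p-1\}$ by isolating even indices: since $\{2k:1\le k\le(p-1)/2\}$ is the set of even numbers up to $p-1$, writing $\frac1k=\frac2{2k}$ gives $V=\sum_{m=1}^{p-1}\frac{H_m^2}{m}+\sum_{m=1}^{p-1}\frac{(-1)^mH_m^2}{m}$. For $U$ I use $H_k=H_{2k}-\overline H_{2k}$ with $\overline H_{2k}=\sum_{i=1}^{2k}\frac{(-1)^{i-1}}{i}$, so that $U=V-\sum_{m=1}^{p-1}(1+(-1)^m)\frac{\overline H_mH_m}{m}$. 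Expanding each square or product by the stuffle relations $H_m^2=2H(1,1;m)+H(2;m)$ and $\overline H_mH_m=-\bigl(H(1,-1;m)+H(-1,1;m)+H(-2;m)\bigr)$ and then summing the outer $\frac1m$ reduces every term to a weight-three multiple harmonic sum over $\{1,\dots,p-1\}$. These are supplied by Lemmas~\ref{(iii)} and~\ref{(iv)}, together with the standard residues of the single power sums $H(3;p-1)$ and $H(-3;p-1)$ modulo $p$.

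Finally I substitute the resulting values of $V$, $U$ and all the lower-weight sums back into $S\equiv-\tfrac12V+\tfrac14U+\cdots$, collect the coefficients of $q_p(2)$, $q_p(2)^2$, $q_p(2)^3$ and $B_{p-3}$, and verify that they equal $2q_p(2)-\frac52q_p(2)^2+\frac12q_p(2)^3+\frac5{16}B_{p-3}$. As an independent check on the delicate weight-three bookkeeping, one may instead split $H_{2k}=\frac12H_k+\sum_{i=1}^k\frac1{2i-1}$ at the outset, which rewrites $S$ as one half of the quantity already evaluated in Lemma~\ref{Lemhk2} plus $\frac12\sum_{k=1}^{(p-1)/2}\frac{H_k}{2k-1}\sum_{i=1}^k\frac1{2i-1}$; matching the two derivations fixes the final constant and catches any arithmetic slip.
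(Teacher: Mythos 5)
Your skeleton does match the paper's own proof: the mod-$p$ reduction to $\frac12\sum_{k\le (p-1)/2}H_kH_{2k}/(2k-1)$, the reflection $k\mapsto(p+1)/2-k$, the expansion of $H_{(p+1)/2-k}$, and the resulting formula $S\equiv-\frac12V+\frac14U+(\text{lower terms})$ with $V=\sum H_{2k}^2/k$ and $U=\sum H_kH_{2k}/k$ are exactly what the paper does (it keeps $H_{2k-2}$ unexpanded, which is cosmetic). The genuine gap is in your evaluation of $U$. Expanding $\overline H_mH_m=-(H(1,-1;m)+H(-1,1;m)+H(-2;m))$ and summing against $(1+(-1)^m)/m$ produces, among others, the full-range sums $H(1,-1,1;p-1)$, $H(1,-1,-1;p-1)$, $H(-1,1,1;p-1)$, $H(-1,1,-1;p-1)$, $H(-1,-2;p-1)$, $H(-2,-1;p-1)$, and your claim that these ``are supplied by Lemmas \ref{(iii)} and \ref{(iv)}'' is false: Lemma \ref{(iv)} contains a single triple, $H(1,1,-1;p-1)$, and no doubly alternating double. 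Some of the missing values can be recovered by standard manipulations: the reversal $k\mapsto p-k$ gives $H(-1,1,1)\equiv H(1,1,-1)$, $H(-1,1,-1)\equiv0$ and $H(-1,-2)\equiv-H(-2,-1)\pmod p$, and stuffling $H(1;p-1)\equiv0$ against $H(1,-1;p-1)$ gives $H(1,-1,1)\equiv-2H(1,1,-1)-H(2,-1)-H(1,-2)$. But $H(1,-1,-1;p-1)$, which enters $U$ with a nonzero coefficient (it arises from $\sum_m(-1)^mH(1,-1;m)/m$), cannot be reached this way: all stuffle/reversal relations built from the paper's data (e.g. $H(-1)H(1,-1)$, $H(1)H(-1,-1)$, $H(-1)H(-1,1)$, with $H(-1;p-1)\equiv-2q_p(2)$ and $H(1,-1;p-1)\equiv q_p(2)^2$ from \eqref{TZp2}) yield only $H(1,-1,-1)\equiv-H(-1,-1,1)$ and the single relation $2H(1,-1,-1)+H(-2,-1)\equiv-2q_p(2)^3-B_{p-3}$, leaving one unknown free. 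So as written the computation of $U$ cannot be completed.

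The hole is fixable, but only with an ingredient you never invoke: either quote the weight-three tables of Tauraso--Zhao \cite{TZ}, or use the paper's Lemma \ref{(v)}: splitting the doubles $H(\pm1,\pm2;p-1)$ by the parities of the two indices gives the exact identity $H(-1,-2;p-1)=\frac12H(1,2;\frac{p-1}2)-H(1,-2;p-1)-H(-1,2;p-1)-H(1,2;p-1)$, whence $H(-1,-2;p-1)\equiv-\frac34B_{p-3}$ and then $H(1,-1,-1;p-1)\equiv-q_p(2)^3-\frac78B_{p-3}$ from the relation above. The paper itself avoids all of this: it never passes to full-range alternating sums, but simply imports the half-range weight-three sums $\sum H_{2k}^2/k$, $\sum H_{2k}/k^2$, $\sum H_kH_{2k}/k$, $\sum H_k/k^2$ from \cite[Lemma 2.4, (3.12), Theorem 1.3]{mw-ijnt-2019} and \cite[Lemma 3.2]{mao-ijnt-2017}, so $U$ is a quoted result there. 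Two smaller inaccuracies: the cross terms you dismiss as ``weight-at-most-two'' include $\sum H_j/j^2$, $\sum H_j/(j(2j-1))$, $\sum H_{2j}/(j(2j-1))$, which are weight three and need \eqref{p+12-k}, \eqref{p+12-2k} and Lemma \ref{(v)} (or outside citations), not just the references you list; and your closing ``independent check'' merely trades $S$ for the unevaluated weight-three sum $\sum_k\frac{H_k}{2k-1}\sum_{i\le k}\frac1{2i-1}$, so it cannot confirm the constant by itself. Your treatment of $V$, by contrast, is sound: all eight full-range sums it produces are covered by Lemmas \ref{(i)}, \ref{(iii)}, \ref{(iv)} and the elementary residue $H(-3;p-1)\equiv-\frac12B_{p-3}$.
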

\begin{proof} By \cite[Lemma 2.4, (3.12)]{mw-ijnt-2019}, (\ref{p+12-2k}), and Lemmas \ref{(i)}, \ref{(ii)} and \ref{(iv)}, we have
\begin{align}\label{h2k2k-2}
&\sum_{k=1}^{(p-1)/2}\frac{H_{2k}H_{2k-2}}{k}=\sum_{k=1}^{(p-1)/2}\frac{H_{2k}^2}{k}-\sum_{k=1}^{(p-1)/2}\frac{H_{2k}}{k(2k-1)}-\frac12\sum_{k=1}^{(p-1)/2}\frac{H_{2k}}{k^2}\notag\\
&\equiv-4q_p(2)+2q_p(2)^2-\frac23q_p(2)^3-\frac1{12}B_{p-3}\pmod p.
\end{align}
In view of \cite[Lemma 3.2]{mao-ijnt-2017}, \cite[Theorem 1.3]{mw-ijnt-2019} and (\ref{p+12-kp2}),
 we have
\begin{align}\label{hk2k-2}
&\sum_{k=1}^{(p-1)/2}\frac{H_{k}H_{2k-2}}{k}=\sum_{k=1}^{(p-1)/2}\frac{H_{2k}H_{k}}{k}-\sum_{k=1}^{(p-1)/2}\frac{H_{k}}{k(2k-1)}-\frac12\sum_{k=1}^{(p-1)/2}\frac{H_{k}}{k^2}\notag\\
&\equiv-8q_p(2)+6q_p(2)^2-\frac43q_p(2)^3+\frac{13}{12}B_{p-3}\pmod p.
\end{align}
It is easy to see that
\begin{align*}
&\sum_{k=1}^{(p-1)/2}\frac{(p/2-k)H_{k}H_{2k}}{(p+1-2k)(p+2k)}\\
&\equiv\frac12\sum_{k=1}^{(p-1)/2}\frac{H_{2k-2}}{k(2k-1)}-\frac14H_{(p-1)/2}\sum_{k=1}^{(p-1)/2}\frac{H_{2k-2}}{k}\\
&-\frac12\sum_{k=1}^{(p-1)/2}\frac{H_{2k}H_{2k-2}}{k}+\frac14\sum_{k=1}^{(p-1)/2}\frac{H_{k}H_{2k-2}}{k}.
\end{align*}
 Combining this with (\ref{h2k2k-2}), (\ref{hk2k-2}), (\ref{p+12-2k}), (\ref{2kk}), (\ref{k2k-1}) and Lemma \ref{(ii)}, we immediately get the desired result.
\end{proof}

\begin{lem}\label{Lemh2k2} For any prime $p>3$, we have
$$
\sum_{k=1}^{(p-1)/2}\frac{(p/2-k)H_{2k}^2}{(p+1-2k)(p+2k)}\equiv q_p(2)-q_p(2)^2+\frac16q_p(2)^3+\frac13B_{p-3}\pmod p.
$$
\end{lem}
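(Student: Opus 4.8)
The plan is to follow verbatim the template used in the proofs of Lemmas \ref{Lemhk2}, \ref{Lemhkh2k} and the other weighted sums in this section. First I would record the elementary congruence
$$\frac{p/2-k}{(p+1-2k)(p+2k)}\equiv\frac{1}{2(2k-1)}\pmod p,$$
which holds because each of $2k-1$, $2k$, $p+1-2k$, $p+2k$ is a $p$-adic unit for $1\le k\le(p-1)/2$ and $p/2\equiv0\pmod p$. Since the target congruence is only modulo $p$ and every $H_{2k}^2$ is $p$-integral, this at once gives
$$\sum_{k=1}^{(p-1)/2}\frac{(p/2-k)H_{2k}^2}{(p+1-2k)(p+2k)}\equiv\frac12\sum_{k=1}^{(p-1)/2}\frac{H_{2k}^2}{2k-1}\pmod p,$$
the exact analogue of the opening step of Lemma \ref{Lemhk2}, with $H_{2k}^2$ in place of $H_k^2$.

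It then remains to evaluate $T:=\sum_{k=1}^{(p-1)/2}H_{2k}^2/(2k-1)$ modulo $p$. I would split $\frac1{2k-1}=\frac1{2k}+\frac1{2k(2k-1)}$, giving $T=\frac12\sum_{k=1}^{(p-1)/2}\frac{H_{2k}^2}{k}+\sum_{k=1}^{(p-1)/2}\frac{H_{2k}^2}{2k(2k-1)}$. The first sum $\sum_{k=1}^{(p-1)/2}H_{2k}^2/k$ is already available: it is the ingredient feeding into (\ref{h2k2k-2}) and comes from \cite[Lemma 2.4, (3.12)]{mw-ijnt-2019}. For the second sum I would write $H_{2k}=H_{2k-1}+\frac1{2k}$, so that $H_{2k}^2=H_{2k-1}^2+\frac{H_{2k-1}}{k}+\frac1{4k^2}$, and then pass from the half-range to full-range sums over $1\le n\le p-1$ by inserting the parity projector $\tfrac{1\pm(-1)^n}{2}$. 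In this way each resulting piece is expressed through multiple harmonic sums already tabulated in the previous section: the depth-one and depth-two evaluations of Lemmas \ref{(i)}, \ref{(ii)}, \ref{(iii)} and \ref{(v)}, the alternating sums of Lemmas \ref{(iv)} and \ref{TZ}, and the low-weight identities (\ref{2kk}), (\ref{k2k-1}) and (\ref{k2k-12}).

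The main obstacle is the weight-three alternating contribution produced by the parity projector, namely sums of the shape $\sum_{n=1}^{p-1}(-1)^n H_n^2/n$ (equivalently, the even-index sum $\sum_{k=1}^{(p-1)/2}H_{2k}^2/(2k(2k-1))$). Its reduction mod $p$ is the only genuinely delicate input: expanding $H_n^2=2H(1,1;n)+H(2;n)$ and interchanging the order of summation turns it into $H(1,1,-1;p-1)$ together with the alternating depth-two sums $H(1,-2;p-1)$, $H(2,-1;p-1)$, which is precisely where Lemma \ref{(iv)} is needed. Once this sum is pinned down, I would assemble all the contributions, cancel the redundant powers of $q_p(2)$, and simplify the Bernoulli terms, arriving at $T\equiv2q_p(2)-2q_p(2)^2+\frac13q_p(2)^3+\frac23B_{p-3}\pmod p$; halving recovers the asserted value $q_p(2)-q_p(2)^2+\frac16q_p(2)^3+\frac13B_{p-3}$.
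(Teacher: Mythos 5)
Your proposal is correct, but it takes a genuinely different route from the paper's proof of Lemma \ref{Lemh2k2}. You share the opening step (the mod-$p$ reduction of the weight $\frac{p/2-k}{(p+1-2k)(p+2k)}$ to $\frac1{2(2k-1)}$, exactly as in Lemma \ref{Lemhk2}), but from there the paper does not split $\frac1{2k-1}=\frac1{2k}+\frac1{2k(2k-1)}$; instead it applies the reflection $k\mapsto(p+1)/2-k$, under which $2k-1\mapsto p-2k\equiv-2k$ and $H_{2k}\mapsto H_{p+1-2k}\equiv H_{2k-2}\pmod p$, converting the whole sum into $-\frac14\sum_{k=1}^{(p-1)/2}H_{2k-2}^2/k$. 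Writing $H_{2k-2}^2=H_{2k-2}\bigl(H_{2k}-\frac1{2k-1}-\frac1{2k}\bigr)$, the paper then evaluates the three resulting sums by recycling congruences already established for the earlier lemmas, namely (\ref{h2k2k-2}), (\ref{p+12-2k}), and a reflected form of (\ref{k2k-12}) combined with \cite[Lemma 2.4, (3.12)]{mw-ijnt-2019} to get $\sum_k H_{2k-2}/k^2\equiv-8q_p(2)+\frac52B_{p-3}\pmod p$; in particular Lemma \ref{(iv)} enters only indirectly, through the derivation of (\ref{h2k2k-2}). Your plan instead pushes everything down to full-range alternating multiple harmonic sums via the parity projector, invoking Lemma \ref{(iv)} directly for $H(1,1,-1;p-1)$, $H(1,-2;p-1)$ and $H(2,-1;p-1)$; note you will also need $H(-3;p-1)\equiv-\frac12B_{p-3}\pmod p$, which follows from Lemma \ref{(ii)} with $a=3$ together with Lemma \ref{(i)}. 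Both routes work: the paper's is shorter because it reuses the half-range sums already computed in Lemmas \ref{Lemh2k} and \ref{Lemhkh2k}, while yours is more systematic and self-contained in that it reduces everything to the standard tables of weight-three sums, at the cost of redoing reductions the paper had already packaged. Your write-up does stop short of the final bookkeeping, but since every ingredient you name is available at the stated precision and the value you assert for $T$ is the one forced by the statement, that assembly is routine arithmetic rather than a gap.
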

\begin{proof} Replacing $k$ by $(p+1)/2-j$ in (\ref{k2k-12}), we have
$$
\sum_{j=1}^{(p-1)/2}\frac1{(2j-1)j^2}\equiv8q_p(2)\pmod p,
$$
and in view of \cite[Lemma 2.4, (3.12)]{mw-ijnt-2019} and (ii), we can deduce that
$$
\sum_{k=1}^{(p-1)/2}\frac{H_{2k-2}}{k^2}\equiv-8q_p(2)+\frac52B_{p-3}\pmod p.
$$
This, together with (\ref{p+12-2k}) and (\ref{h2k2k-2}), yields that
\begin{align*}
&\sum_{k=1}^{(p-1)/2}\frac{(p/2-k)H_{2k}^2}{(p+1-2k)(p+2k)}\\
&\equiv-\frac{1}4\left(\sum_{k=1}^{(p-1)/2}\frac{H_{2k}H_{2k-2}}{k}-\sum_{k=1}^{(p-1)/2}\frac{H_{2k-2}}{k(2k-1)}-\frac12\sum_{k=1}^{(p-1)/2}\frac{H_{2k-2}}{k^2}\right)\\
&\equiv q_p(2)-q_p(2)^2+\frac16q_p(2)^3+\frac13B_{p-3}\pmod p.
\end{align*}
This ends the proof.
\end{proof}
\begin{lem}\label{Lemh2kk} Let $p>3$ be a prime. Then
$$\sum_{k=1}^{(p-1)/2}\frac{(p/2-k)H(2;2k)}{(p+1-2k)(p+2k)}\equiv q_p(2)-\frac3{16}B_{p-3}\pmod p,$$
$$\sum_{k=1}^{(p-1)/2}\frac{(p/2-k)H(2;k)}{(p+1-2k)(p+2k)}\equiv 4q_p(2)-\frac7{8}B_{p-3}\pmod p.$$
\end{lem}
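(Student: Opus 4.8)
The plan is to reduce both sums, modulo $p$, to the much more tractable weighted sums $\frac12\sum_{k=1}^{(p-1)/2}\frac{H(2;2k)}{2k-1}$ and $\frac12\sum_{k=1}^{(p-1)/2}\frac{H(2;k)}{2k-1}$, and then to evaluate those by passing to standard weight-$3$ multiple harmonic sums. First I would observe that, exactly as in the preceding lemmas, the rational factor collapses modulo $p$: since $p/2-k\equiv-k$, $p+1-2k\equiv-(2k-1)$ and $p+2k\equiv 2k$, one has $\frac{p/2-k}{(p+1-2k)(p+2k)}\equiv\frac1{2(2k-1)}\pmod p$. As $H(2;k)$ and $H(2;2k)$ are $p$-integral and the target congruences hold only modulo $p$, this immediately gives $\sum_{k=1}^{(p-1)/2}\frac{(p/2-k)H(2;2k)}{(p+1-2k)(p+2k)}\equiv\frac12\sum_{k=1}^{(p-1)/2}\frac{H(2;2k)}{2k-1}$ and the analogous identity for $H(2;k)$.

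Next I would evaluate $\sum_{k=1}^{(p-1)/2}\frac{H(2;k)}{2k-1}$. The naive index shift $k\mapsto k+1$ merely reproduces the sum (it is self-dual under this shift and hence yields a tautology), so instead I would exploit the reflection $\frac1{2k-1}\equiv-\frac1{p+1-2k}=-\frac1{2((p+1)/2-k)}\pmod p$, which turns the sum into $-\frac12\sum_{j=1}^{(p-1)/2}\frac{H(2;(p+1)/2-j)}{j}$. Using $\frac{p-1}2\equiv-\frac12\pmod p$ one obtains the reflection formula $H\big(2;\frac{p+1}2-j\big)\equiv-4\sum_{s=1}^{j-1}\frac1{(2s-1)^2}\pmod p$; substituting it and swapping the order of summation reduces everything, after a second reflection of the inner harmonic number, to the three standard sums $\sum_{l=1}^{(p-1)/2}\frac{H_l}{l^2}$, $\sum_{l=1}^{(p-1)/2}\frac{H_{2l}}{l^2}$ and $\sum_{l=1}^{(p-1)/2}\frac1{(2l-1)l^2}\equiv8q_p(2)$ (this last already appearing in the proof of Lemma \ref{Lemh2k2}). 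The values of the first two are available from Lemmas \ref{(i)} and \ref{(ii)} together with the auxiliary identities of \cite{mw-ijnt-2019}, and should give $\frac12\sum\frac{H(2;k)}{2k-1}\equiv4q_p(2)-\frac78B_{p-3}$.

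For the $H(2;2k)$ sum I would use the splitting $H(2;2k)=\frac14H(2;k)+\sum_{i=1}^k\frac1{(2i-1)^2}$, which expresses it through the sum just computed plus the extra piece $\frac12\sum_{k=1}^{(p-1)/2}\frac1{2k-1}\sum_{i=1}^k\frac1{(2i-1)^2}$; this remaining odd-denominator weight-$3$ sum is handled by the same reflection device and contributes only a multiple of $B_{p-3}$. Collecting the contributions then yields $\frac12\sum\frac{H(2;2k)}{2k-1}\equiv q_p(2)-\frac3{16}B_{p-3}$, which is the first assertion.

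The main obstacle is the genuine evaluation of these weight-$3$ sums with odd denominators: because they are invariant under the obvious index shift, no elementary manipulation pins them down, and one is forced through the reflection $k\mapsto(p+1)/2-k$ followed by a careful reduction to the standard multiple harmonic sums. The delicate part is tracking the exact rational coefficients of $q_p(2)$ and $B_{p-3}$ through the two successive reflections so as to land precisely on the constants $-\frac3{16}$ and $-\frac78$.
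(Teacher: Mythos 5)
Your proposal is correct and follows essentially the same route as the paper: reduce the rational prefactor to $\frac{1}{2(2k-1)}$ modulo $p$, apply the reflection $k\mapsto\frac{p+1}{2}-k$ together with $H\left(2;\frac{p-1}{2}\right)\equiv 0\pmod p$, and reduce to known weight-three harmonic sums from the cited literature. The only organizational difference is that the paper evaluates the $H(2;2k)$ sum directly by the same reflection (via $H(2;p+1-2k)\equiv-H(2;2k-2)\pmod p$), whereas you derive it from the $H(2;k)$ sum plus the purely odd sum $\sum_{k=1}^{(p-1)/2}\frac{1}{2k-1}\sum_{i=1}^{k}\frac{1}{(2i-1)^2}$, which indeed reduces under the same reflection to a pure multiple of $B_{p-3}$ (namely $\frac{1}{16}B_{p-3}$), so both reductions land on the stated constants.
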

\begin{proof} In view of \cite[(2.3)]{mw-ijnt-2019}, (\ref{k2k-12}) and Lemma \ref{(ii)}, we have
\begin{align*}
&\sum_{k=1}^{(p-1)/2}\frac{(p/2-k)H(2;2k)}{(p+1-2k)(p+2k)}\\
&\equiv\frac14\left(\sum_{k=1}^{(p-1)/2}\frac{H(2;2k)}{k}-\sum_{k=1}^{(p-1)/2}\frac{1}{k(2k-1)^2}-\frac14\sum_{k=1}^{(p-1)/2}\frac{1}{k^3}\right)\\
&\equiv q_p(2)-\frac{3}{16}B_{p-3}\pmod p.
\end{align*}
It is easy to see that
\begin{align*}
\sum_{k=1}^{(p-1)/2}\frac{(p/2-k)H(2;k)}{(p+1-2k)(p+2k)}&\equiv\frac12\sum_{k=1}^{(p-1)/2}\frac{H(2;(p+1)/2-k)}{p-2k}\\
&\equiv\sum_{k=1}^{(p-1)/2}\frac{H(2;2k-2)}k-\frac14\sum_{k=1}^{(p-1)/2}\frac{H(2;k-1)}k.
\end{align*}
Observe that
\begin{align*}
\sum_{k=1}^{(p-1)/2}\frac{H(2;2k-2)}k&=\sum_{k=1}^{(p-1)/2}\frac{H(2;2k)}k-\sum_{k=1}^{(p-1)/2}\frac{1}{k(2k-1)^2}-\frac14\sum_{k=1}^{(p-1)/2}\frac{1}{k^3}\\
&\equiv4q_p(2)-\frac34B_{p-3}\pmod p
\end{align*}
and
\begin{align*}
\sum_{k=1}^{(p-1)/2}\frac{H(2;k-1)}k=\sum_{k=1}^{(p-1)/2}\frac{H(2;k)}k-\sum_{k=1}^{(p-1)/2}\frac{1}{k^3}\equiv\frac12B_{p-3}\pmod p.
\end{align*}
So
$$\sum_{k=1}^{(p-1)/2}\frac{(p/2-k)H(2;k)}{(p+1-2k)(p+2k)}\equiv 4q_p(2)-\frac7{8}B_{p-3}\pmod p.$$
Therefore the proof of Lemma \ref{Lemh2kk} is complete.
\end{proof}
\begin{lem} \label{G13} For any primes $p>3$, we have
\begin{align*}
&\sum_{k=1}^{(p-1)/2}G\left(\frac{p+1}2,k\right)\\
&\equiv (-1)^{\frac{p-1}2}p^2\left(q_p(2)-pq_p(2)^2+p^2q_p(2)^3+\frac7{8}p^3B_{p-3}\right)\pmod{p^5}.
\end{align*}
\end{lem}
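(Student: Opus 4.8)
The plan is to set $n=(p+1)/2$ in the definition of $G$, strip off the $k$-independent factors, and reduce the sum to the eight weighted harmonic sums already evaluated in Lemmas~\ref{tuo}--\ref{Lemh2kk}.

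First I would simplify the constant part. Since $\binom{p+1}{(p+1)/2}=\frac{4p}{p+1}\binom{p-1}{(p-1)/2}$, the prefactor $n^2\binom{2n}{n}/2^{8n-2k-4}$ equals $p(p+1)\binom{p-1}{(p-1)/2}/2^{4p-2k}$. The remaining $k$-dependent binomial ratio divided by $p+2k$ carries exactly one factor of $p$ (essentially the term $p$ occurring inside the central binomial $\binom{p+1+2k}{(p+1)/2+k}$), so writing it as $p\rho_k$ with $\rho_k$ a $p$-adic unit gives
$$\sum_{k=1}^{(p-1)/2}G\!\left(\tfrac{p+1}2,k\right)=p^2(p+1)\binom{p-1}{(p-1)/2}\sum_{k=1}^{(p-1)/2}\frac{\rho_k}{2^{4p-2k}}.$$
With the $p^2$ now explicit, it remains to compute the last sum modulo $p^3$. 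Here I would invoke Morley's congruence $\binom{p-1}{(p-1)/2}\equiv(-1)^{(p-1)/2}4^{p-1}\pmod{p^3}$ (the Carlitz refinement in Lemma~\ref{Lemc} only enters at $p^3$, hence lies beyond $p^5$ after the outer $p^2$), together with $2^{p-1}=1+pq_p(2)$, which turns $4^{p-1}/2^{4p-2k}$ into $4^k/\big(16(1+pq_p(2))^2\big)$. This reduces the task to evaluating $\sum_k 4^k\rho_k$ modulo $p^3$.

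The key step is that $4^k\rho_k$ equals a $k$-independent constant $C$ times the weight $w_k=\frac{p/2-k}{(p+1-2k)(p+2k)}$ times a product of unit factors of the shape $1+p/j$; here the powers of $2$ attached to the central binomials $\binom{p+1\pm2k}{(p+1)/2\pm k}$ and to $\binom{2k}{k}$ must combine with the inserted $4^k$ to a $k$-free constant. Expanding the unit product to second order gives
$$4^k\rho_k=C\,w_k\Big(1+p\,(\text{weight one})+p^2\,(\text{weight two})+O(p^3)\Big),$$
whose coefficients are exactly the harmonic monomials $H_k,H_{2k}$ (weight one) and $H_k^2,H_kH_{2k},H_{2k}^2,H(2;k),H(2;2k)$ (weight two). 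Summing against $w_k$ then calls on Lemma~\ref{tuo} for $\sum_k w_k$ (modulo $p^3$), Lemmas~\ref{Lemhk} and \ref{Lemh2k} for $\sum_k w_kH_k$ and $\sum_k w_kH_{2k}$ (modulo $p^2$), and Lemmas~\ref{Lemhk2}, \ref{Lemhkh2k}, \ref{Lemh2k2}, \ref{Lemh2kk} for the five weight-two sums (modulo $p$). Multiplying the assembled total by $(-1)^{(p-1)/2}\frac{p+1}{16}(1+pq_p(2))^{-2}=(-1)^{(p-1)/2}\frac{p+1}{16}\big(1-2pq_p(2)+3p^2q_p(2)^2\big)+O(p^3)$ and collecting the powers of $q_p(2)$ and the $B_{p-3}$-terms should yield the congruence asserted in Lemma~\ref{G13}.

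The main obstacle is the coefficient bookkeeping in this final assembly. One must expand the binomial product to the right order, verify that all $k$-dependent powers of $2$ cancel (which pins down the constant $C$), and then add the eight lemma outputs against the $q_p(2)$-expansion of $(1+pq_p(2))^{-2}$. The delicate feature is that the coefficient of $q_p(2)^3$ (at order $p^2$) is fed simultaneously by the $p^2$-term of Lemma~\ref{tuo}, the $p$-terms of Lemmas~\ref{Lemhk} and \ref{Lemh2k}, and the mod-$p$ weight-two sums, while the $B_{p-3}$-coefficient gathers Bernoulli contributions scattered across Lemmas~\ref{tuo}, \ref{Lemhk}, \ref{Lemh2k}, \ref{Lemhk2}, \ref{Lemhkh2k}, \ref{Lemh2k2} and \ref{Lemh2kk}; arranging that all of these collapse into the single clean value is where the real work lies.
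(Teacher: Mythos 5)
Your proposal is correct and is essentially the paper's own argument: the paper likewise extracts the explicit $p^2$ (via a Pochhammer-symbol rewriting of $G$ and Sun's expansion of $\binom{(p-1)/2}{k}(-4)^k/\binom{2k}{k}$), reduces the sum to $\sum_k \frac{p/2-k}{(p+1-2k)(p+2k)}\bigl(1+\tfrac{3p}{2}H_k-pH_{2k}+p^2(\cdots)\bigr)$ with exactly the weight-one and weight-two harmonic monomials you list, invokes Lemmas \ref{tuo}--\ref{Lemh2kk} at precisely the precisions you indicate, and finishes with Morley's congruence and $4^{p-1}=(1+pq_p(2))^2$. Your side remark is also consistent with the paper: although the paper cites Lemma \ref{Lemc}, only its Morley part modulo $p^3$ is actually used, since the $\frac{p^3}{12}B_{p-3}$ correction is invisible behind the outer factor $p^2$.
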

\begin{proof} For any complex number $a$, let $(a)_0=1$ and $(a)_n=a(a+1)\ldots(a+n-1)$
 for $n\in\Z^+$.
By the definition of $G(n,k)$, we have
\begin{align}\label{Gpk1}
G(n,k)&=\frac{n^2\binom{2n}{n}\binom{2n+2k}{n+k}\binom{2n-2k}{n-k}\binom{n+k}{n}}{2^{8n-4-2k}(2n+2k-1)\binom{2k}k}=\frac{n^2\binom{2n}{n}\left(\frac12\right)_{n+k}\left(\frac12\right)_{n-k}\binom{n}{k}}{2^{4n-4-2k}n!^2(2n+2k-1)\binom{2k}k}\notag\\
&=\frac{n^2\binom{2n}{n}\left(\frac12\right)_{n}\left(\frac12\right)_{n-1}\left(\frac12+n\right)_{k}\binom{n}{k}}{2^{4n-4-2k}n!^2\left(\frac12+n-k\right)_{k-1}(2n+2k-1)\binom{2k}k}\notag\\
&=\frac{n\binom{2n}{n}^2\binom{2n-2}{n-1}\left(\frac12+n\right)_{k}\binom{n}{k}}{2^{8n-6-2k}n!^2\left(\frac12+n-k\right)_{k-1}(2n+2k-1)\binom{2k}k},
\end{align}
where we have used the equalities
$$
\frac{\left(\frac12\right)_{n+k}}{(n+k)!}=\frac{\binom{2n+2k}{n+k}}{4^{n+k}},
$$
$$
\left(\frac12\right)_{n+k}=\left(\frac12\right)_{n}\left(\frac12+n\right)_{k}
$$
and
$$
\left(\frac12\right)_{n-k}\left(\frac12+n-k\right)_{k-1}=\left(\frac12\right)_{n-1}\ \ (1\le k\le n).
$$

It is easy to check that
\begin{align*}
\frac{\left(\frac{p}2+1\right)_k}{\left(\frac{p}2-k\right)_k}&\equiv\frac{k!\left(1+\frac{p}2H_k+\frac{p^2}4\sum_{1\leq i<j\leq k}\frac1{ij}\right)}{(-1)^kk!(\left(1-\frac{p}2H_k+\frac{p^2}4\sum_{1\leq i<j\leq k}\frac1{ij}\right)}\\
&\equiv(-1)^k\left(1+pH_k+\frac{p^2}2H_k^2\right)\pmod{p^3}.
\end{align*}
In view of \cite[(4.4)]{sun-ijm-2015}, we have the following congruence modulo $p^3$
\begin{align*}
&\frac{\binom{(p-1)/2}k(-4)^k}{\binom{2k}k}\equiv1-p\sum_{j=1}^k\frac1{2j-1}+\frac{p^2}2\left(\left(\sum_{j=1}^k\frac1{2j-1}\right)^2-\sum_{j=1}^k\frac1{(2j-1)^2}\right)\\
&=1-p\left(H_{2k}-\frac12H_{k}\right)+\frac{p^2}2\left(\left(H_{2k}-\frac12H_k\right)^2-H(2;2k)+\frac14H(2;k)\right).
\end{align*}
By (\ref{Gpk1}), we have the following congruence modulo $p^5$
\begin{align*}
&\sum_{k=1}^{(p-1)/2}G\left(\frac{p+1}2,k\right)\equiv\frac{(p+1)^2\binom{p+1}{(p+1)/2}^2\binom{p-1}{(p-1)/2}}{2^{4p-1}}\sum_{k=1}^{(p-1)/2}\frac{(p/2-k)}{(p+1-2k)(p+2k)}\\
&\cdot \left(1+\frac{3p}2H_k-pH_{2k}+\frac{9p^2}8H_k^2-\frac{3p^2}2H_kH_{2k}+\frac{p^2}2H_{2k}^2-\frac{p^2}2\left(H(2;2k)-\frac{H(2;k)}4\right)\right).
\end{align*}
In view of Lemmas \ref{tuo}--\ref{Lemh2kk} and Lemma \ref{Lemc}, we have the following congruence modulo $p^5$
\begin{align*}
&\sum_{k=1}^{(p-1)/2}G\left(\frac{p+1}2,k\right)\\
&\equiv\frac{(p+1)^2\binom{p+1}{\frac{p+1}2}^2\binom{p-1}{\frac{p-1}2}}{2^{4p-1}}\left(\frac12q_p(2)-\frac32pq_p(2)^2+3p^2q_p(2)^3+\frac7{16}p^2B_{p-3}\right)\\
&\equiv\frac{2p^2\binom{p-1}{(p-1)/2}^3}{2^{4p-4}}\left(\frac12q_p(2)-\frac32pq_p(2)^2+3p^2q_p(2)^3+\frac7{16}p^2B_{p-3}\right)\\
&\equiv2(-1)^{(p-1)/2}p^24^{p-1}\left(\frac12q_p(2)-\frac32pq_p(2)^2+3p^2q_p(2)^3+\frac7{16}p^2B_{p-3}\right).
\end{align*}
Then we obtain the desired result by noting that $4^{p-1}=1+2pq_p(2)+p^2q_p(2)^2$.
\end{proof}

\medskip
\noindent{\it Proof of Theorem \ref{Thsun1}}. Substituting Lemmas \ref{Fp12p12} and \ref{G13} into (\ref{wz2}),  we immediately get that
$$
\sum_{n=0}^{(p-1)/2}F(n,0)\equiv p(-1)^{(p-1)/2}+(-1)^{(p-1)/2}\frac7{24}p^4B_{p-3}\pmod{p^5},
$$
which is equivalent to our desired result. \qed

\medskip

 \noindent{\bf Acknowledgments.}
The first author is funded by the National Natural Science Foundation of China (grant no. 12001288), and the second author is supported by the National Natural Science Foundation of China (grant no. 11971222).

\end{document}